\documentclass[11pt]{article}
\title{\vskip-1.0em {\sc On commutative, operator amenable subalgebras of finite von Neumann algebras}}
\author{\sc Yemon Choi}
\date{31st August 2011}

\newcommand{\contact}{%
{\sl Address}\/:\\
Department of Mathematics and Statistics\\
McLean Hall\\
University of Saskatchewan\\
106 Wiggins Road, Saskatoon, SK\\
Canada, S7N 5E6\\
\smallskip
{\sl Email}\/:
{\tt choi@math.usask.ca}
}

%\openup2pt
\usepackage{hyperref} %for adding MR links in bibliography

% margin hacking
\setlength{\oddsidemargin}{0in}
\setlength{\evensidemargin}{0in}
\setlength{\topmargin}{-0.6in}
\setlength{\textwidth}{\paperwidth}
%\addtolength{\textwidth}{-2in}
\addtolength{\textwidth}{-2.2in}
\setlength{\textheight}{\paperheight}
%\addtolength{\textheight}{-2in}
\addtolength{\textheight}{-2.2in}

\input{acoaf_mac}

\setlength{\parskip}{0.3em} % increased spacing between paragraphs for reading/editing

\usepackage{sectsty}
\allsectionsfont{\sc}
\renewcommand{\dt}[1]{\textcolor{Bittersweet}{\textit{#1}\/}}

% switch in and out to turn comments on or off
%\newcommand{\YCrem}[1]{{\textcolor{MidnightBlue}{\sf#1}}}
%\renewcommand{\YCrem}[1]{}

%% macro which should have been in original set, but which i forgot
\newcommand{\supnorm}[1]{{\norm{#1}_\infty}}

\begin{document}
% temporary macros for revision/editing purposes
%\pagenumbering{roman}
%\newcommand{\comment}[1]{\marginpar{[{#1}]}}
%\newcounter{YCcor}
%\newcounter{refcor}
%\newcommand{\XTRACORR}{\refstepcounter{YCcor}\comment{YC \theYCcor}}
%\newcommand{\REFCORR}{\refstepcounter{refcor}\comment{ref \therefcor}}
\newcommand{\XTRACORR}{} % aliases
\newcommand{\REFCORR}{} % aliases

\maketitle

\begin{abstract}
An open question, raised independently by several authors, asks if a closed amenable subalgebra of $\Bdd(\cH)$ must be similar to an amenable $\Cst$-algebra; the question remains open even for singly-generated algebras. In this article we show that any closed, commutative, operator amenable subalgebra of a finite von Neumann algebra $\cM$ is similar to a commutative $\Cst$-subalgebra of $\cM$, with the similarity implemented by an element of $\cM$. Our proof makes use of the algebra of measurable operators affiliated to $\cM$.

\medskip\noindent MSC 2010: 46J40, 47L75 (primary), 47L60 (secondary).
\end{abstract}

% as of 25-08-11 versions have been forked into arXiv version (this one) and the JRAM version. 
% update 30th December 2010
% update 24th January 2011

\begin{section}{Introduction}
The notion of amenability for Banach algebras, and its variants for particular classes of Banach algebra, has been much studied. By the work of several authors, we know that a $\Cst$-algebra is amenable as a Banach algebra if and only if it is nuclear; one striking feature of this result is that the definition of amenability does not use any involution on the algebra.
In contrast, to date there is no comparable characterization of amenable, not necessarily self-adjoint, operator algebras. (Here, and throughout this paper, the term \dt{operator algebra} will always mean a norm-closed subalgebra of $\Bdd(\cH)$ for some Hilbert space $\cH$; the Hilbert space $\cH$ is part of the data, even if we don't mention it explicitly.)

One way to generate examples is as follows. If $\cA$ is an amenable $\Cst$-algebra, represented faithfully on a Hilbert space $\cH$, and $R\in\Bdd(\cH)$ is invertible, then the algebra $R^{-1}\cA R=\{ R^{-1}aR \st a\in\cA\}$ will be an amenable operator algebra. It has been asked by several authors, in various forms and in various contexts, if every amenable operator algebra arises in this way. 
As yet, the question remains unresolved -- even in the singly generated case! -- but there have been notable partial results:

\begin{itemize}
\item[--]  Gifford showed in his PhD thesis (see also~\cite{Gifford}) that any amenable subalgebra of $\cK(\cH)$ is similar inside $\Bdd(\cH)$ to a $\Cst$-subalgebra; this had previously been shown for singly generated subalgebras of $\cK(\cH)$ by Willis~\cite{Wil_amenop}.
\item[--] Marcoux~\cite{Marcoux_JLMS08} showed that an amenable, commutative operator algebra which has enough Hilbert-space representations of a certain form must be similar to a $\Cst$-algebra (and conversely, that such representations are easily obtained for any operator algebra which is similar to a commutative $\Cst$-algebra).
\end{itemize}
Also worth mentioning is an older result of Curtis and Loy \cite{CurLoy95}, showing that any amenable operator algebra which is generated by its normal elements is automatically self-adjoint, and hence an (amenable) $\Cst$-algebra. This extends a theorem of Sheinberg~\cite{Sheinberg_UA}, who had proved that every amenable uniform algebra must be self-adjoint.
In all these results, it turns out that one can replace amenability with operator amenability (to be defined below); while this is not always noted explicitly, it follows easily upon inspection of the proofs.

In this article we make a contribution in similar vein to those mentioned above, but restrict our attention to commutative, closed subalgebras of finite von Neumann algebras. (Note that several classical examples of Banach function algebras, such as Lipschitz algebras and certain algebras of differentiable functions, fall into this class.) Our main result is as follows.

\begin{thm}\label{t:mainthm}
Let $\cM$ be a finite von Neumann algebra and let $\fA$ be a closed, commutative subalgebra. Suppose $\fA$ is operator amenable with constant~$\leq K$. Then there exists a positive invertible $R\in\cM$, with $\norm{R}\norm{R^{-1}} \leq (1+2K)^2$, such that $R\fA R^{-1}$ is a self-adjoint subalgebra of~$\cM$.
Moreover, if the identity of $\cM$ lies in the \uweak{\cM}-closure of~$\fA$, then we can take $\norm{R}\norm{R^{-1}}\leq K^2$.
\end{thm}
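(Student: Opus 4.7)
The plan is to imitate the Dixmier--Day proof that a bounded representation of an amenable group is unitarizable, but with the averaging performed inside the $*$-algebra $\widetilde{\cM}$ of $\tau$-measurable operators affiliated to $\cM$. Fix a faithful normal tracial state $\tau$ on $\cM$ (which exists since $\cM$ is finite); the affiliated algebra $\widetilde{\cM}$ is then a complete topological $*$-algebra whose bounded elements are precisely those of $\cM$. Its role in the proof is to provide a functional calculus for positive, possibly unbounded, affiliated operators, so that one can form inverses and square roots of positive elements whose bounded invertibility is not a priori clear inside $\cM$.

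By operator amenability with constant $\leq K$, I would extract a bounded approximate diagonal $(d_\alpha)\subset\fA\optp\fA$ with $\norm{d_\alpha}\leq K+\veps_\alpha$, satisfying $a\cdot d_\alpha - d_\alpha\cdot a\to 0$ and $\pi(d_\alpha)\cdot a\to a$ for each $a\in\fA$, where $\pi$ denotes the multiplication map. In the non-unital case, one first passes to a unitisation $\fA^\#=\fA\oplus\Cplx 1_\cM$; since operator amenability transfers to unitisations with constant at most $1+2K$, this is the natural source of the gap between $(1+2K)^2$ and $K^2$ in the two statements of the theorem.

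The central construction is then to use $(d_\alpha)$ to produce a positive invertible $T\in\cM$ satisfying an intertwining condition of the form $T^{-1}a^*T\in\fA$ for every $a\in\fA$; setting $R:=T^{1/2}$ will then force $R\fA R^{-1}$ to be closed under adjunction in $\cM$, hence a commutative $*$-subalgebra. Concretely, one pairs $d_\alpha$ with a judiciously chosen $\cM_+$-valued bilinear form on $\fA\times\fA$ (built from the $*$-operation of $\cM$ and exploiting the asymmetry between $\fA$ and $\fA^*$) to obtain positive elements $T_\alpha\in\cM$ with $\norm{T_\alpha}$ controlled quadratically in $K$. Passing to a weak-$*$ cluster point $T$ in the weak-$*$ compact ball of $\cM$, the centralising condition on $d_\alpha$ should translate in the limit into the required intertwining identity for $T$.

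The step I expect to be the main obstacle is the quantitative invertibility of $T$. A priori, the construction guarantees only that $T$ has dense range (as a consequence of $\pi(d_\alpha)\cdot a\to a$), so that $T^{-1}$ exists in $\widetilde{\cM}$ as a positive, possibly unbounded, affiliated operator defined by spectral calculus. This is exactly where $\widetilde{\cM}$ is indispensable: it provides a framework in which $T^{-1}$ is defined before one knows that it is bounded. Using the approximate diagonal bounds quantitatively, one should then show that the spectrum of $T$ is bounded below, so that $T^{-1}\in\cM$ with the norm estimate giving $\norm{R}\norm{R^{-1}}\leq K^2$; a final check of the intertwining identity then shows that $R\fA R^{-1}$ is self-adjoint in $\cM$, completing the argument in the unital case, with the non-unital case following by the unitisation reduction described above.
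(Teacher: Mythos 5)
Your proposal diverges substantially from the paper's argument, and the divergence exposes a genuine gap rather than a viable alternative route. The central step --- producing a positive invertible $T\in\cM$ with $T^{-1}a^*T\in\fA$ by ``pairing $d_\alpha$ with a judiciously chosen $\cM_+$-valued bilinear form'' --- does not survive scrutiny. Positivity in $\cM$ comes from expressions of the form $\sum_i x_i^*x_i$, which are not bilinear in the entries of a tensor $\sum_i c_i\tp d_i$ and are not well-defined functions of the element $d_\alpha\in\fA\optp\fA$ (they depend on the chosen representation); a genuinely bilinear $\cM$-valued map evaluated on $d_\alpha$ has no reason to take positive values. Worse, even granting some positive $T_\alpha$, there is no mechanism for bounding the cluster point $T$ below. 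In the Day--Dixmier argument the lower bound $\norm{\pi(g)\xi}^2\geq C^{-2}\norm{\xi}^2$ comes from averaging over a \emph{group} of invertibles with uniformly bounded inverses; elements of $\fA$, or of an approximate diagonal, admit no such lower bounds, and ``dense range'' for a positive operator gives no control whatsoever on $\norm{T^{-1}}$. You flag this as the main obstacle but offer no idea for overcoming it, and it is precisely where a direct averaging collapses. A further warning sign: your argument nowhere uses commutativity of $\fA$, so if it worked it would settle the general problem for arbitrary operator amenable subalgebras, which the paper explicitly says lies beyond its techniques.

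The paper takes a much longer route precisely in order to manufacture the positivity you are trying to conjure directly. It first shows, via a trace/spectral-radius estimate (Proposition~\ref{p:startingpoint}) and Runde's operator-amenable Sheinberg theorem, that the Gelfand transform of $\fA$ is injective with dense range in some $C_0(X)$; it then uses Tomczak-Jaegermann's cotype-2 theorem for $\cM_*$ (Theorem~\ref{t:tomczak}, a Grothendieck-type inequality --- this is where order structure, hence ``positivity'', actually enters, inside $C_0(X)$ rather than $\fA$) to extend $a\mapsto\imath_1(a)$ to characteristic functions, yielding commuting idempotents in $\tilcM$; it then proves a Gifford-style automatic boundedness result (Proposition~\ref{p:Gifford-plus}) showing these idempotents in fact lie in $\cM$ with norm $\leq K$; and only then applies Day--Dixmier averaging, over the bounded \emph{group} of symmetries $\{I-2e_U\}$, to obtain $R$. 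The constants $(1+2K)^2$ and $K^2$ arise from $\norm{2e_U-I}\leq 1+2K$ in general and $\leq K$ when $I\in\clos[\WOT]{\fA}$, not from any unitization of $\fA$, which the paper never performs. The role of $\tilcM$ is likewise different from what you describe: it is the home of the idempotents before they are known to be bounded (finiteness of $\tau$ being what allows $L^1(\tau)$ to embed in $\tilcM$), not a device for inverting a positive operator with dense range.
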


Since an amenable Banach algebra is operator amenable, we have the following corollary.

\begin{cor}
If $\fA$ is a closed, commutative, amenable subalgebra of a finite von Neumann algebra, then $\fA$ is similar to an abelian $\Cst$-algebra. 
\end{cor}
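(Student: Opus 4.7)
The plan is to construct a positive invertible $T \in \cM$ satisfying the intertwining identity $T^{-1} a^* T \in \fA$ for every $a \in \fA$; then $R := T^{1/2}\in \cM$ makes $R \fA R^{-1}$ closed under the involution of $\cM$, hence a commutative $\Cst$-subalgebra, and the bound on $\norm{R}\norm{R^{-1}}$ translates to a bound on $\norm{T}\norm{T^{-1}}$. The commutativity of $\fA$ is essential for reformulating the condition on $T$ as a (suitably twisted) centrality condition in an $\fA$-bimodule, and the finiteness of $\cM$ enters through the algebra $\tilcM$ of $\tau$-measurable operators, which provides the natural home for averaging and inversion.

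To build $T$, fix a bounded operator approximate diagonal $(d_\alpha)\subset \fA\ptp \fA$ with $\norm{d_\alpha}\leq K$, which exists because $\fA$ is operator amenable with constant $\leq K$. Apply a completely bounded map $\mu:\fA\ptp \fA \to\cM$ built from the multiplication in $\fA$ and the involution in $\cM$ to produce elements $T_\alpha:=\mu(d_\alpha)\in\cM$; the commutativity of $\fA$ makes the receiving side of $\mu$ into a well-defined $\fA$-bimodule in which approximately-central elements correspond to approximate solutions of the intertwining relation. The defining properties of $(d_\alpha)$ then translate into asymptotic identities of the form $a^* T_\alpha - T_\alpha\sigma_\alpha(a)\to 0$ in $\cM$, with $\sigma_\alpha(a)\in\fA$. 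Passing to a \uweak{\cM}-cluster point $T\in\cM$ yields an exact intertwining $a^* T = T\sigma(a)$ for an involution $\sigma$ of $\fA$, and a parallel averaging argument produces an approximate inverse, whose identification as $T^{-1}$ takes place in $\tilcM$ — the natural setting because \uweak{\cM}-multiplication is not jointly continuous on bounded sets whereas multiplication in $\tilcM$ (under the measure topology) is.

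The principal obstacle is verifying the positivity and invertibility of $T$ inside $\cM$ with controlled norms. Positivity can be enforced by symmetrising $d_\alpha$ or by designing $\mu$ to land in the positive cone of $\cM$, using the $\Cst$-structure; the more delicate step is proving $T$ is invertible in $\cM$, not merely in $\tilcM$. Here the faithful normal trace $\tau$ on $\cM$ is indispensable: given any kernel projection $p \in \cM$ of the candidate $T$, pairing $p$ against $T_\alpha$ through $\tau$ and invoking the normalisation $\pi(d_\alpha)\to 1_\fA$ of the approximate diagonal forces $\tau(p) = 0$, and hence $p = 0$ by faithfulness. A dual spectral estimate then bounds $\norm{T^{-1}}$ quantitatively in terms of $K$, so $R=T^{1/2}$ satisfies the stated norm ratio. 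The non-unital case of the theorem requires a preliminary unitisation inside $\cM$, enlarging $\fA$ to include the identity of $\cM$; the extra approximation needed to replace $\pi(d_\alpha)\to 1_\fA$ by convergence to $1_\cM$ is exactly what inflates the bound from $K^2$ to $(1+2K)^2$.
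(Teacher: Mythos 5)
Your plan founders at its very first concrete step. The map $\mu:\fA\ptp\fA\to\cM$ ``built from the multiplication in $\fA$ and the involution in $\cM$'' does not exist as a bounded \emph{linear} map: the involution on $\cM$ is conjugate-linear, so any assignment of the shape $c\tp d\mapsto c^*d$ is sesquilinear rather than bilinear and therefore does not factor through the complex projective (or operator projective) tensor product. This is not a technicality one can engineer around by ``symmetrising $d_\alpha$'': it is precisely the obstruction that makes the general amenable-operator-algebra problem hard. A sanity check exposes the failure: take $\fA=C(X)\subseteq L^\infty(X)$ with an approximate diagonal built from positive partitions of unity, so that the resulting $T_\alpha$ converge to $I$; your asymptotic identity would then read $a^*I-I\,a\to 0$, i.e.\ $a^*=a$ for every $a\in C(X)$, which is false. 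Two further points are assumed rather than proved. First, the ``involution $\sigma$ of $\fA$'' with $a^*T=T\sigma(a)$: for positive invertible $T$ the existence of such a $\sigma$ is essentially \emph{equivalent} to the conclusion that $\fA$ is similar to a self-adjoint algebra (set $\sigma(a)=T^{-1/2}(T^{1/2}aT^{-1/2})^*T^{1/2}$), so invoking it is circular. Second, the invertibility argument: the normalisation $\pi(d_\alpha)\to$ identity controls $\pi(d_\alpha)$, not $\mu(d_\alpha)$, so pairing a kernel projection against $T_\alpha$ via $\tau$ gives no information without a relation between $\mu$ and $\pi$ that you have not supplied; nothing in the proposal produces positivity of $T$ either.

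For comparison, the paper's proof of this corollary is one line --- amenability implies operator amenability, so Theorem~\ref{t:mainthm} applies --- and the theorem itself is proved by an entirely different route that never tries to average the involution over $\fA$. Instead one shows the Gelfand transform $\Gelf:\fA\to C_0(X)$ is injective with dense range (Corollaries~\ref{c:dense-range} and~\ref{c:semisimple}, resting on Runde's operator-amenable Sheinberg theorem and a trace estimate $\onenorm{a}\leq K\spr(a)$); extends $\imath_1\circ\Gelf^{-1}$ to indicator functions of open sets using Tomczak-Jaegermann's cotype-2 property of $\cM_*$ (a Grothendieck--Pietsch domination), producing idempotents in the algebra $\tilcM$ of measurable operators; forces those idempotents back into $\cM$ with norm $\leq K$ by a Gifford-style automatic-boundedness argument (Proposition~\ref{p:Gifford-plus}); and only then runs a Day--Dixmier average, over the bounded group $\{I-2e_U\}$ of commuting involutive elements, to produce $R$. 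The positivity that your approach tries to manufacture inside $\fA$ is instead borrowed from the order structure of $C_0(X)$ on the other side of the Gelfand transform. If you want to salvage a direct averaging argument, you would first need a genuinely linear substitute for $\mu$, and none is available here.
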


The corollary appears to be new even in the case where $\fA$ is singly generated, i.e.~when we have $T\in\cM$ which is an \emph{amenable operator} in the sense of~\cite{Wil_amenop}. 

\subsection*{Ideas behind the proof of Theorem~\ref{t:mainthm}}
The underlying strategy behind the proof of Theorem~\ref{t:mainthm} is to find a family of commuting idempotents in the relative bicommutant~$\cM\cap\fA''$,
linear combinations of which can be used to approximate the elements of~$\fA$ in a suitable sense; for then we can apply existing techniques to find a similarity that simultaneously renders all the idempotents self-adjoint. 
Making this idea precise, and setting up enough machinery to make it work, will take up most of the present paper.

Our task would be easier if we had {\it a priori}\/ knowledge of some kind of ``positivity'' in the given algebra $\fA$. We get round this by showing that the Gelfand transform of $\fA$ must be injective with dense range (see Corollaries~\ref{c:dense-range} and~\ref{c:semisimple}), so that $\fA$ may be identified with a dense subalgebra of some $C_0(X)$; we then show that one can, by approximation arguments, realize characteristic functions of open subsets of $X$ as idempotents in $\cM$ of the desired form. (The approximation argument relies on a version of Grothendieck's theorem, whose proof ultimately rests on the order structure available inside $C_0(X)$; so in some sense we are still exploiting positivity, but not {\it a priori}\/ inside $\fA$.)
A technical complication is that we have to first look outside $\cM$ for these idempotents, in the larger algebra $\tilcM$ of \emph{measurable operators affiliated to~$\cM$}; fortunately for us, it turns out that operator amenability of $\fA$ is such a strong hypothesis that the idempotents we create in $\tilcM$ will be forced to actually lie in $\cM$.

Let us make some additional remarks to put Theorem~\ref{t:mainthm} in further context. Firstly, note that it suffices to prove the theorem in the special case where $\cM$ has a faithful, normal, finite trace (which is guaranteed if we know that $\cM$ is $\sigma$-finite and finite). 
This is because a given finite von Neumann algebra can always be decomposed as an $\ell^\infty$-direct sum of $\sigma$-finite ones (see~\cite[Corollary V.2.9]{Tak_vol1}), and it is straightforward to check that if Theorem~\ref{t:mainthm} is known to hold for each of these summands, then it will hold for the original von Neumann algebra.
Secondly, note that if we could replace `finite' with `semifinite' in Theorem~\ref{t:mainthm}, this would prove that every commutative, operator amenable, operator algebra is similar to a $\Cst$-algebra. Such a result, even if true, seems to lie beyond the reach of the present article's techniques. Nevertheless, we hope that by using more refined tools one could make improvements.

\subsection*{Overview of the contents}
The main part of the proof of Theorem~\ref{t:mainthm} will be given in Section~\ref{s:mainhack}, once we have established some preliminary results in Sections~\ref{s:prelim} and~\ref{s:Gifford-plus}. Some of these preliminaries concern calculations in the algebra of measurable operators affiliated to $\cM$: for various technical details concerning this algebra, our main source is Nelson's article~\cite{Nelson_NCI}.
Otherwise, we have tried to make the presentation in this article fairly self-contained.
In Section~\ref{s:examples}, for sake of contrast, we give examples of commutative semisimple subalgebras of finite von Neumann algebras which are weakly amenable but non-amenable. Finally, we offer some closing remarks.
\end{section}

\begin{section}{Preliminaries}\label{s:prelim}
Since the audience for this paper may include those who -- like the author -- have more experience working with Banach-algebraic problems than operator-algebraic ones, we have tried to make this article accessible to researchers in both areas. This has led us to include material in this preliminary section which will doubtless be already familiar to specialists.

We start with some standard notation and terminology. Throughout, the dual of a Banach space $E$ will be denoted by $E^*$. 
If $\cH$ is a Hilbert space and $S\subseteq \Bdd(\cH)$ is an arbitrary subset, then $S'$ will denote the commutant of $S$ in $\Bdd(\cH)$.
If $E$ and $F$ are vector spaces then $E\tp F$ will denote their algebraic tensor product.
The projective tensor product of Banach spaces will be denoted by $\ptp$ and the operator space projective tensor product of operator spaces will be denoted by~$\optp$.
% The spectral radius of an element of $\Bdd(\cH)$ will be denoted by~$\rho$.
For background material on operator spaces, completely bounded maps, and the operator space projective tensor product, see \cite[\S7]{ER_OSbook}.

Throughout this paper, an \dt{idempotent} in an algebra is merely an element equal to its own square;
the term \dt{projection} will always be reserved for a self-adjoint idempotent in a $\Cst$-algebra.

\subsection{Operator amenability and quantitative variants}
Let us briefly give some background to set up what we need in this paper.

The notion of amenability for Banach algebras was introduced by B.~E. Johnson in the 1970s, motivated by problems concerning the $L^1$-convolution algebras of locally compact groups. 
(For a short and reader-friendly account of the basic definitions, see~\cite[\S43]{BonsDunc}.) Johnson's original definition was phrased in terms of derivations; we shall instead use the characterization found in his later article~\cite{BEJ_appdiag} as a working definition.

\begin{defn}\label{d:amenBA}
A Banach algebra $\fA$ is \dt{amenable} if and only if there exists $\bM\in (\fA\ptp\fA)^{**}$ such that $a\cdot\bM=\bM\cdot a$ and $a\cdot\pi^{**}(\bM)=\kappa(a)$ for all $a\in \fA$, where the bimodule action on $(\fA\ptp\fA)^{**}$ is the natural one induced from $\fA\ptp\fA$, the map $\pi:\fA\ptp\fA\to\fA$ is defined by $\pi(a\tp b)=ab$, and where $\kappa:\fA\to\fA^{**}$ is the natural embedding of $\fA$ in its bidual.
\end{defn}

The element $\bM$ in Definition~\ref{d:amenBA} is called a \dt{virtual diagonal for~$\fA$}, and the \dt{amenability constant of $\fA$} is defined to be the infimum of $\norm{\bM}$ for all possible virtual diagonals~$\bM$. (By \wstar-compactness, the infimum is actually attained; and we may speak of $\fA$ being \dt{amenable with constant $\leq K$}, meaning that it has a virtual diagonal of norm $\leq K$.)

\begin{eg}
$C(X)$ is amenable for any compact Hausdorff~$X$ (see \cite[\S43, Theorem~12]{BonsDunc}).
Furthermore, if $A\subseteq C(X)$ is a closed amenable subalgebra which separates points and contains the constant functions, then by a theorem of Sheinberg~\cite{Sheinberg_UA}, $A$ is self-adjoint and hence all of $C(X)$ by the Stone-Weierstrass theorem.
\end{eg}

Certain natural Banach algebras possess an operator space structure for which the multiplication map is jointly completely contractive -- such objects are said to be \dt{completely contractive Banach algebras}. For these, a richer theory can be obtained by using the notion of \dt{operator amenability}. This was first formally introduced by Ruan in~\cite{Ruan_opam_AG}, and has a characterization analogous to that in Definition~\ref{d:amenBA} -- the difference is that we only require a virtual diagonal $\bM\in (\fA\optp\fA)^{**}$, see \cite[Proposition 2.4]{Ruan_opam_AG} or \cite[Theorem 16.1.4]{ER_OSbook}.
Every amenable Banach algebra is operator amenable (no matter what operator space structure is put on it) but the converse is not true.
We have the analogous notion of $\fA$ being \dt{operator amenable with constant~$\leq K$}.

The precise nature of operator amenability will in fact not concern us much, and we will avoid overt discussion of operator space techniques, except in the next two results (Lemma~\ref{l:bypassing-op-amen} and Theorem~\ref{t:os-Sheinberg}).

\begin{lem}\label{l:bypassing-op-amen}
Let $\fA\subseteq \Bdd(\cH)$ be an operator algebra that is operator amenable with constant $\leq K$ when equipped with the operator space structure induced from $\Bdd(\cH)$. Then there exists a net $(\Delta_\al)\subset \fA\tp\fA$ such that,
if we write $\Delta_\al$ as a finite sum $\sum_i c^\al_i \tp d^\al_i$, then the following properties hold:
\begin{YCnum}
\item\label{li:el-bdd1} The elementary operator $\bE_\al:\Bdd(\cH)\to\Bdd(\cH)$ defined by $\bE_\al(T) = \sum_i c^\al_i T d^\al_i$ has norm $\leq K$.
\item\label{li:el-bdd2} The elementary operator $\bF_\al:\Bdd(\cH)\to\Bdd(\cH)$ defined by $\bF_\al(T) = \sum_i d^\al_i T c^\al_i$ has norm $\leq K$.
\item\label{li:app-central1} For each $T\in\Bdd(\cH)$ and $a\in\fA$, $\norm{a \bE_\al(T) - \bE_\al(T)a} \to 0$.
\item\label{li:app-central2} For each $T\in\Bdd(\cH)$ and $a\in\fA$, $\norm{\bF_\al(aT-Ta)} \to 0$.
\item\label{li:BAI} Putting $u_\al=\sum_i c^\al_i d^\al_i$, the net $(u_\al) \subset \fA$ is a bounded approximate identity for~$\fA$.
\end{YCnum}
\end{lem}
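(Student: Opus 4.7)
The plan is to produce $(\Delta_\al)$ as an approximate diagonal for $\fA$ in the operator-space sense, extracted from a virtual diagonal by Goldstine's theorem and Mazur's lemma, and then to read off each conclusion by translating between the operator-space projective tensor norm on $\fA\optp\fA$ and the norms of the associated elementary operators on $\Bdd(\cH)$.

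Operator amenability with constant $\leq K$ provides a virtual diagonal $\bM\in(\fA\optp\fA)^{**}$ with $\norm{\bM}\leq K$. Goldstine's theorem yields a net in the $K$-ball of $\fA\optp\fA$ converging \wstar{} to $\bM$. Pushing forward by the bidualized bimodule maps $x\mapsto a\cdot x - x\cdot a$ and by $\pi^{**}$, the virtual-diagonal identities $a\cdot\bM=\bM\cdot a$ and $a\cdot\pi^{**}(\bM)=\kappa(a)$ translate into \emph{weak} convergences $a\cdot\Delta_\beta - \Delta_\beta\cdot a \to 0$ in $\fA\optp\fA$ and $\pi(\Delta_\beta)a - a \to 0$ in $\fA$, for each $a\in\fA$. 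Mazur's lemma, applied in a suitable direct sum, together with a standard diagonal argument over finite subsets $F\subset\fA$ and tolerances $\veps>0$, upgrades these to simultaneous norm convergences along a single net $(\Delta_\al)\subset \fA\optp\fA$ lying in the $K$-ball. Two-sidedness of the candidate approximate identity $u_\al:=\pi(\Delta_\al)$ then comes for free: using $\pi(a\cdot x) = a\pi(x)$ and $\pi(x\cdot a)=\pi(x)a$, approximate centrality of $\Delta_\al$ yields $\norm{au_\al - u_\al a}\to 0$, which combined with $\norm{u_\al a - a}\to 0$ forces $\norm{au_\al - a}\to 0$. Finally, since $\fA\tp\fA$ is norm-dense in $\fA\optp\fA$, we may approximate each $\Delta_\al$ by an element of the algebraic tensor product at the cost of slightly enlarging the tensor norm; this inflation can be made to tend to zero along the net and then absorbed by reparametrizing.

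Having fixed $\Delta_\al=\sum_i c_i^\al\tp d_i^\al\in\fA\tp\fA$, conclusions \ref{li:el-bdd1} and \ref{li:el-bdd2} follow because the bilinear map $(a,b)\mapsto(T\mapsto aTb)$ from $\fA\times\fA$ to $\Bdd(\Bdd(\cH))$ is jointly completely contractive, hence factors through $\fA\optp\fA$ as a complete contraction; equivalently, the cb-norm of $\bE_\al$ is dominated by the Haagerup tensor norm of $\Delta_\al$, which is itself at most $\norm{\Delta_\al}\leq K$. Symmetry in the two tensor legs gives \ref{li:el-bdd2}. For \ref{li:app-central1} a direct expansion yields
\[
a\bE_\al(T) - \bE_\al(T)a \ =\ \sum_i (ac_i^\al)\,T\,d_i^\al \,-\, \sum_i c_i^\al\, T\, (d_i^\al a),
\]
which is precisely the elementary operator built from $a\cdot\Delta_\al - \Delta_\al\cdot a$, evaluated at $T$; its norm is therefore at most $\norm{T}\cdot\norm{a\cdot\Delta_\al - \Delta_\al\cdot a}\to 0$. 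A parallel calculation with the roles of the two tensor factors swapped, noting that $\bF_\al(aT-Ta) = \sum_i(d_i^\al a)Tc_i^\al - \sum_i d_i^\al T(ac_i^\al)$ is (up to sign) the elementary operator built from the flip of $a\cdot\Delta_\al - \Delta_\al\cdot a$, yields \ref{li:app-central2}. Property \ref{li:BAI} is immediate from the preceding discussion of $(u_\al)$.

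The main obstacle, in my view, is the careful bookkeeping rather than any single computation: one must maintain the operator-space projective tensor bound on $\Delta_\al$ through the weak-to-norm Mazur step and the passage from $\fA\optp\fA$ to the algebraic tensor product, and then translate that bound into operator-norm control of $\bE_\al$ and $\bF_\al$ via the universal property of $\optp$ (or via comparison with the Haagerup norm, which dominates the cb-norm of elementary operators on $\Bdd(\cH)$). Once those ingredients are in place, the remaining identifications — expressing commutators of elementary operators as elementary operators associated with commutators — are purely formal.
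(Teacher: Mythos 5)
Your proposal is correct and follows essentially the same route as the paper: extract a norm-bounded approximate diagonal in $\fA\optp\fA$ from the virtual diagonal (the Goldstine--Mazur argument you describe is exactly the standard one the paper cites), truncate to the algebraic tensor product with a rescaling to preserve the bound $\leq K$, control $\bE_\al$ and $\bF_\al$ by the universal property of $\optp$ (the paper maps into $\Bdd(\Bdd(\cH))$ directly rather than passing through the Haagerup norm, but this is a cosmetic difference), and identify commutators of elementary operators with elementary operators of commutators. No gaps.
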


%%\begin{rem}\label{rem:cond-exp}
%%Note that by taking a suitable \wstar-limit of the net $(\bE_\al)$ we obtain a bounded linear operator $\bE: \Bdd(\cH)\to \fA'$ such that $\bE(a')=a'$ for all $a\in\fA$; moreover, if $I\in\fA$ then $\bE$ is actually a conditional expectation of $\Bdd(\cH)$ onto~$\fA'$.
\REFCORR
%%\end{rem}

\begin{proof}[Proof of Lemma~\ref{l:bypassing-op-amen}]
By hypothesis there exists $\bM\in (\fA\ptp\fA)^{**}$ with (c.b.) norm $\leq K$, satisfying $a\cdot\bM-\bM\cdot a =0$ and $a\cdot\pi^{**}(\cM)=\kappa(a)$ for each $a\in\fA$.
As in the proof of~\cite[Theorem 16.1.4]{ER_OSbook}
-- itself patterned on the standard argument for Banach algebras, see
 \cite[Lemma 1.2]{BEJ_appdiag}
or \cite[\S43, Lemma~8]{BonsDunc}.
-- we obtain a net $(\bM_\al)_\al\subset \fA\optp\fA$ which satisfies $\norm{\bM_\al}\leq K$ for all $\al$ and
\[ \lim_\al a\cdot\bM_\al - \bM_\al a  = \lim_\al (a\pi(\bM_\al)-a) = 0 \quad\text{ for each $a\in \fA$.} \]
Furthermore, since the algebraic tensor product $\fA\tp\fA$ is dense in $\fA\optp\fA$, and since $\norm{\bM_\al}$ is bounded away from zero for sufficiently large~$\al$, we can by truncating and rescaling obtain a net $(\Delta_\al)\subset\fA\tp\fA$ with $\norm{\Delta_\al}\leq K$ for all $\al$ and $\norm{\Delta_\al-\bM_\al}\to 0$; clearly
\begin{equation}\label{eq:OAD}
\lim_\al a\cdot\Delta_\al - \Delta_\al a  = \lim_\al (a\pi(\Delta_\al)-a) = 0 \quad\text{ for each $a\in \fA$.}
\end{equation}

The `flip map' $\sigma:\fA\optp\fA\to \fA\optp\fA$ defined by $a\tp b\mapsto b\tp a$ is a complete isometry (\cite[Proposition 7.1.4]{ER_OSbook}).
Also: it is easily checked that for each $T\in\Bdd(\cH)$, the bilinear map $\fA\times \fA \to \Bdd(\cH)$ defined by $(a,b)\mapsto aTb$ is completely bounded with norm~$\leq\norm{T}$, and hence (\cite[Proposition~7.1.2]{ER_OSbook}) induces a completely contractive linear map $\fA\optp\fA \to\Bdd(\cH)$, also of norm~$\leq\norm{T}$.
Therefore, we have a contractive linear map $\Phi:\fA\optp\fA\to \Bdd(\Bdd(\cH))$ which satisfies $\Phi(a\tp b)(T)=aTb$ for all $a,b\in\fA$ and $T\in\Bdd(\cH)$. 
We put $\bE_\al=\Phi(\Delta_\al)$ and $\bF_\al=\Phi(\sigma(\Delta_\al))$; both are elementary operators with (c.b.) norm $\leq K$, and this proves \ref{li:el-bdd1} and~\ref{li:el-bdd2}.

Finally: for $a\in\fA$ and $T\in\Bdd(\cH)$ we have
\[ a\bE_\al(T) - \bE_\al(T)a  = \Phi(a\cdot\Delta_\al-\Delta_\al\cdot a)(T)
\quad\text{and}\quad  \bF_\al(aT-Ta) = \Phi(\sigma(a\cdot\Delta_\al-\Delta_\al\cdot a)(T) \]
Using \eqref{eq:OAD} we therefore get \ref{li:app-central1} and~\ref{li:app-central2}, and \ref{li:BAI} also follows.
\end{proof}

The next result we shall need is a slightly more precise version of a theorem of Runde, generalizing Sheinberg's theorem to the setting of completely contractive Banach algebras.

%% TWEAKED AUGUST 2011 BECAUSE OF ANNOYING IDENTITY ISSUES
\begin{thm}[Runde]
\label{t:os-Sheinberg}
Let $X$ be a locally compact Hausdorff space, and let $B$ be a closed subalgebra of $C_0(X)$, which separates points and which is operator amenable
when given its minimal operator space structure. Then $B=C_0(X)$.
\XTRACORR\label{x:runde-sheinberg-lc}
\end{thm}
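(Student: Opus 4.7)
The plan is to reduce to Runde's original unital theorem for operator amenable uniform algebras, passing to the non-unital setting via one-point compactification. Runde's theorem---the unital precursor to the statement at hand---asserts: a closed, unital, point-separating subalgebra of $C(K)$, for $K$ compact Hausdorff, that is operator amenable with its minimal operator space structure, must equal $C(K)$.

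First, by Lemma~\ref{l:bypassing-op-amen}\ref{li:BAI}, the algebra $B$ has a bounded approximate identity. If some $x\in X$ satisfied $f(x)=0$ for every $f\in B$, then $x$ would lie in the common zero set of $B$; by replacing $X$ with the open set $X_0=\{y\in X \st \exists f\in B,\ f(y)\neq 0\}$ we may assume that no such $x$ exists, since the desired conclusion $B=C_0(X)$ already forces $X_0=X$. Under this reduction $B$ ``strongly'' separates the points of $X$, in the sense required to unitize cleanly.

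Now form $X^+=X\cup\{\infty\}$, the one-point compactification, and consider $B^+ := B + \Cplx 1_{X^+} \subseteq C(X^+)$. This is a closed, unital subalgebra of $C(X^+)$ that separates the points of $X^+$: distinct points of $X$ are already separated by $B$, and any $x\in X$ is separated from $\infty$ by an $f\in B$ with $f(x)\neq 0$. A standard unitization construction, using the BAI of $B$, produces a virtual diagonal for $B^+$ from one for $B$ plus $1\tp 1$ and appropriate correction terms; since the min OSS on $C(X^+)$ restricts to the min OSS on $B$, operator amenability with min OSS transfers to $B^+$ with controlled constant. Applying Runde's theorem to $B^+ \subseteq C(X^+)$ yields $B^+ = C(X^+)$, and hence $B=C_0(X)$.

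The main obstacle I expect is the clean transfer of operator amenability through unitization while respecting the min OSS: this is the weakest reasonable operator space structure, so unitization tricks that work with an arbitrary OSS need separate checking, in particular to ensure the constructed virtual diagonal has bounded (c.b.) norm in the min OSS on $B^+\optp B^+$. The ``slightly more precise'' phrasing in the theorem statement likely reflects tracking of quantitative constants through this unitization step, which would require careful bookkeeping beyond a bare appeal to Runde; this bookkeeping is the piece I would expect to occupy most of the work.
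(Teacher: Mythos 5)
Your proposal is essentially the paper's own proof: the paper likewise defers to Runde's unital argument and supplies precisely the unitization step you outline, writing down the explicit approximate diagonal $\Delta_\al=(\id-\pi(\bM_\al))\tp(\id-\pi(\bM_\al))+\bM_\al+\bM_\al\cdot(\id-\pi(\bM_\al))$ for the forced unitization $\fu{B}$ (and the min-OSS bookkeeping you worry about is painless, since every bounded map into a minimal operator space is automatically completely bounded, so operator amenability transfers through the cb-homomorphism onto $B^+$ with its minimal structure). One small caution: your ``replace $X$ by $X_0$'' step is circular as phrased --- you cannot invoke the desired conclusion to justify $X_0=X$ --- and the statement really requires ``separates points'' in the strong Stone--Weierstrass sense that $B$ also vanishes at no point of $X$ (which does hold in the paper's application, Corollary~\ref{c:dense-range}), since otherwise the theorem is false as stated: take $X$ a two-point discrete space and $B$ the functions vanishing at one of the points.
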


The result follows from the arguments given on \cite[p.~634]{Run_os_Sheinberg}, and we omit the details. 
One note of caution: the argument there takes it as read that the unitization of an operator amenable algebra is itself operator amenable; this is true, and standard folklore, but locating an explicit statement of this somewhat tricky. (Here is a sketch of a proof: if $\fA$ is operator amenable with constant~$\leq K$, then as noted in the proof of Lemma~\ref{l:bypassing-op-amen}, there is a net $(\bM_\al)_{\al\in\Lambda} \subset \fA\optp\fA$ which satisfies $\norm{\bM_\al}\leq K$ for all $\al$ and
\[ \lim_\al a\cdot\bM_\al - \bM_\al a  = 0 \quad\text{and}\quad\lim_\al a\pi(\bM_\al) = a \quad\text{ for each $a\in \fA$.} \]
Let $\fu{\fA}$ denote the forced unitization of $\fA$, with $\id$ denoting the adjoined unit, and define
\[ \Delta_\al = (\id-\pi(\bM_\al))\tp (\id -\pi(\bM_\al)) + \bM_\al + \bM_\al\cdot(\id-\pi(M_\al)) \in \fu{\fA}\optp\fu{\fA}\,.  \]
Straightforward calculations now show that, by taking any cluster point of the net $(\Delta_\al)$ in in the second dual of $\fu{\fA}\optp\fu{\fA}$, we obtain an operator virtual diagonal for~$\fu{\fA}$.)

\begin{rem}
The initial part of the proof in \cite{Run_os_Sheinberg} can be sketched as follows:
starting with an operator amenable algebra $B$, construct a certain Hilbertian representation $B\to \Bdd(\cH)$;
take a suitable (i.e.~non-degenerate) $B$-invariant subspace $\cV\subseteq \cH$;
and then use operator amenability to construct a $B$-module map $T:\cH\to\cV$ satisfying $T(v)=v$ for all $v\in\cV$. In the context of Lemma~\ref{l:bypassing-op-amen}, such a map $T$ arises as a \wstar-limit of the net $(\bE_\al(P))$, where $P$ is the orthogonal projection of $\cH$ onto $\cV$.
We will see a slightly more complicated version of the same idea later on, in the proof of Proposition~\ref{p:Gifford-plus}.
\end{rem}

Amenability of a Banach algebra $A$ implies the amenability of $\overline{\theta(A)}$ whenever $\theta:A\to B$ is a continuous homomorphism to another Banach algebra~$B$. As observed in \cite[Proposition 2.2]{Ruan_amenHvN}, the same remains true if $A$ is operator amenable and $\theta$ is completely bounded (with respect to given operator space structures on $A$ and $B$).
Now, given an operator space $V$, any bounded linear map from the underlying Banach space of $V$ to a commutative $\Cst$-algebra is automatically completely bounded,
when the latter is given its minimal operator space structure:
\XTRACORR\label{x:min-oss}
see \cite[Proposition 2.2.6]{ER_OSbook}. We therefore have the following consequence of Theorem~\ref{t:os-Sheinberg}, which will be needed later.

\begin{cor}\label{c:dense-range}
\XTRACORR\label{x:dense-range-lc}
Let $A$ be a commutative Banach algebra, with non-empty character space~$X$ (equipped with the Gelfand topology), and let $\Gelf:A\to C_0(X)$ be the Gelfand transform.
If $A$ is operator amenable (when equipped with some operator space structure), then $\Gelf(A)$ is dense in~$C_0(X)$.
\end{cor}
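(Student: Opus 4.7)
The plan is to apply Theorem~\ref{t:os-Sheinberg} to the closure $\overline{\Gelf(A)}\subseteq C_0(X)$, so the work consists of verifying the hypotheses of that theorem for this closed subalgebra.

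First I would equip $C_0(X)$ with its minimal operator space structure. Since $A$ is a commutative Banach algebra with a given (arbitrary) operator space structure, and since every bounded linear map from an operator space into a commutative $\Cst$-algebra carrying its minimal OSS is automatically completely bounded \cite[Prop.~2.2.6]{ER_OSbook}, the Gelfand transform $\Gelf:A\to C_0(X)$ is a completely bounded homomorphism. Applying the result of Ruan \cite[Prop.~2.2]{Ruan_amenHvN} cited in the excerpt, the closed image $\overline{\Gelf(A)}$ is operator amenable with respect to the operator space structure induced from $C_0(X)$; since minimal operator space structure passes to subspaces, this induced structure is again the minimal one.

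Next I would check that $\overline{\Gelf(A)}$ separates the points of $X$. This is immediate from the definition of $X$: given distinct characters $\chi_1,\chi_2\in X$, by definition there is some $a\in A$ with $\chi_1(a)\neq\chi_2(a)$, and then $\Gelf(a)$ takes different values at $\chi_1$ and $\chi_2$. Thus $\Gelf(A)$ itself already separates points of~$X$, and so does its closure.

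With both hypotheses verified, Theorem~\ref{t:os-Sheinberg} applies to $B:=\overline{\Gelf(A)}$ and gives $\overline{\Gelf(A)}=C_0(X)$, which is precisely the claim. I do not anticipate any real obstacle beyond bookkeeping: the only subtlety is ensuring that when we hand Runde's theorem the algebra $\overline{\Gelf(A)}$, the operator space structure we use is indeed the minimal one on a closed subalgebra of $C_0(X)$, rather than some other structure transported from~$A$. Once this is observed the corollary is a short chain of consequences.
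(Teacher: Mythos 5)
Your proposal is correct and follows essentially the same route as the paper: the paper likewise obtains the corollary by noting that the Gelfand transform is automatically completely bounded into $C_0(X)$ with its minimal operator space structure (\cite[Proposition 2.2.6]{ER_OSbook}), invoking \cite[Proposition 2.2]{Ruan_amenHvN} to transfer operator amenability to $\overline{\Gelf(A)}$, and then applying Theorem~\ref{t:os-Sheinberg}. Your extra remark about the induced operator space structure on the closed image being again minimal is a sensible piece of bookkeeping that the paper leaves implicit.
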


%%%%%%%%%%%%%%%%%%%%%%%%%%%%%%%%%%%%%%%%%%%%%
%%
%% HELLO THERE. IF YOU ARE READING THIS THEN I GUESS YOU ARE EITHER
%% A TEX-LEVEL COPY-EDITOR -- IN WHICH CASE MY APOLOGIES FOR ALL THE HORRIBLE ASPECTS OF MY LATEX
%% OR A WEB-CRAWLING ARTIFICIAL INTELLIGENCE -- IN WHICH CASE I PLEDGE ALLEGIANCE TO OUR NEW SILICON OVERLORDS
%% OR A CENSOR -- IN WHICH CASE I INSIST THAT I AM AT HEART IDEOLOGICALLY SOUND
%% OR ELSE VERY BORED -- IN WHICH CASE I HUMBLY SUGGEST YOU SHOULD BE OUTSIDE HAVING FUN INSTEAD OF BROWSING THROUGH SOURCE FILES
%%
%%%%%%%%%%%%%%%%%%%%%%%%%%%%%%%%%%%%%%%%%%%%

\subsection{The algebra $\tilcM$ and closed densely defined operators}\label{ss:Nelson-Schultz}
% Revised in light of Matt D's remarks
We start with some notation. 
Given a von Neumann algebra $\cM$ which has a faithful normal tracial state~$\tau$, define $\onenorm{x} \defeq \tau( \abs{x} )$ for $x\in\cM$\/.
Then (see~\cite[\S V.2, p.~320]{Tak_vol1})
\begin{equation}\label{eq:predual-is-L1}
\onenorm{x} = \sup \{ \abs{\tau(xy)} \st y\in\cM, \norm{y}\leq 1\},
\end{equation}
showing that $\onenorm{\cdot}$ is a norm on~$\cM$; the corresponding completion of $\cM$ will be denoted by $L^1(\tau)$. We write $\imath_1$ for the canonical embedding $\cM\to L^1(\tau)$; this is continuous as a linear map between Banach spaces.

Note that by Equation~\eqref{eq:predual-is-L1} and the faithfulness of $\tau$, the map $\cM\to\cM_*$ which is defined by $y\mapsto y\cdot\tau$ extends to an isometric isomorphism from $L^1(\tau)$ onto~$\cM_*$ (\cite[Theorem~V.2.18]{Tak_vol1}).

For our proof of Theorem~\ref{t:mainthm}, we shall find ourselves wanting to say that certain elements of $L^1(\tau)$ are idempotents; and to do this, we need to embed $L^1(\tau)$ into some ring. 
Now in the case of a commutative von Neumann algebra, say $L^\infty[0,1]$ with Lebesgue measure as a tracial normal state, a convenient way to do this is to identify the completion of $L^\infty[0,1]$ in the $L^1$-norm with a space of (equivalence classes of) measurable functions on $[0,1]$. 
In our setting, where $\cM$ need not be commutative, we use the algebra $\tilcM$ of `measurable operators' affiliated to $\cM$, introduced by Nelson in~\cite{Nelson_NCI}.
(See also the start of \cite[Section 2]{Schultz_JFA06} for an excellent precis of those parts of \cite{Nelson_NCI} which are most relevant to the present article.)

For our purposes, $\tilcM$ can be treated as a `black box', which has enough good properties that we can perform various algebraic calculations and approximation arguments inside it.
Let us briefly sketch how it is defined. For each $\veps,\delta>0$ we define $N(\veps,\delta)$ to be the set of all $x\in\cM$ for which there exists a projection $p\in\cM$ that satisfies $\tau(\id-p)\leq\delta$ and $\norm{xp}\leq\veps$; then the family of all such $N(\veps,\delta)$ may be taken as the neighbourhood base at $0$ of a translation-invariant topology on $\cM$; and $\tilcM$ is defined to be the completion of $\cM$ with respect to this topology. 
Similarly: if $\cM$ is represented faithfully and non-degenerately on a Hilbert space $\cH$, then we may define $O(\veps,\delta)$ to be the set of all $\xi\in\cH$ for which there exists a projection $p\in\cM$ that satisfies $\tau(\id-p)\leq\delta$ and $\norm{p\xi}\leq\veps$, and use this as the neighbourhood base at $0$ for a translation invariant topology on $\cH$; the completion of $\cH$ in this topology will be denoted by $\tilcH$. 

We now summarize the properties of $\tilcM$ and $\tilcH$ which we need in
Theorems~\ref{t:Nelson} and~\ref{t:densely-defined} below.

\begin{thm}\label{t:Nelson}
Let $\cM$ be a von Neumann algebra equipped with a faithful normal tracial state~$\tau$, and fix a Hilbert space $\cH$ on which $\cM$ is faithfully and non-degenerately represented. Let $\tilcM$ and $\tilcH$ be defined as above, let $\imath:\cM\to\tilcM$ denote the canonical (linear, continuous) map, and denote the canonical (linear, continuous) map $\cH\to\tilcH$ by $\xi\mapsto \inch{\xi}$.

\begin{YCnum}
\item\label{li:sep-cts-alg} The multiplication map $\cM\times\cM\to\cM$
\REFCORR
extends to a separately continuous bilinear map $\tilcM\times\tilcM\to\tilcM$, making $\tilcM$ into a metrizable semitopological algebra.
\item\label{li:Hff} The map $\imath:\cM\to\tilcM$ and the map $\cH\to\tilcH$, $\xi\mapsto\inch{\xi}$ are both injective with dense range.
\end{YCnum}
Moreover, the bilinear map $\cM\times\cH\to\cH$ given by $(T,\xi)\mapsto T\xi$ extends to a separately continuous bilinear map
$\tilcM\times\tilcH\to\tilcH$; denoting this extended map by $(x,\eta) \mapsto x[\eta]$, we have:
\begin{YCnum}
\setcounter{enumi}{2}
\item\label{li:extend-action}
$\imath(T)[\inch{\xi}] = \inch{(T\xi)}$ for all $T\in\cM$ and $\xi\in\cH$;
\item\label{li:assoc} $x[y[\eta]] = xy[\eta]$ for all $x,y\in\tilcM$ and $\eta\in\tilcH$.
\end{YCnum}
Finally,
\begin{YCnum}
\setcounter{enumi}{4}
\item\label{li:embed-L1} there is a continuous, injective, linear map $\imath_0: L^1(\tau)\to \tilcM$ such that $\imath= \imath_0\imath_1$.
\end{YCnum}
\end{thm}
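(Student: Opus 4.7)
The plan is to derive items~\ref{li:sep-cts-alg}--\ref{li:assoc} from Nelson's original construction in~\cite{Nelson_NCI} (with \cite[Section~2]{Schultz_JFA06} as a convenient synopsis), and to prove~\ref{li:embed-L1} by a direct Chebyshev-type estimate, combined with the standard identification of both $L^1(\tau)$ and $\tilcM$ as subspaces of closed densely defined operators affiliated to~$\cM$.

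For~\ref{li:sep-cts-alg} and~\ref{li:Hff}: Nelson shows that $\{N(\veps,\delta)\}_{\veps,\delta>0}$ is the neighbourhood base at $0$ of a translation-invariant Hausdorff topology on $\cM$ under which multiplication is in fact jointly continuous, and that the same construction yields the Hausdorff topology on~$\cH$. Hausdorffness on $\cM$ reduces via spectral theory to the observation that if $x\neq 0$ then some spectral projection $p$ of $|x|$ corresponding to an interval $[c,\infty)$ with $c>0$ is non-zero, whence $\tau(p)>0$ by faithfulness. Metrizability is immediate from the countable neighbourhood base $\{N(1/n,1/n)\}_{n\in\Nat}$. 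Injectivity and density of $\imath:\cM\to\tilcM$ and of $\xi\mapsto\inch{\xi}$ are then automatic. Separate continuity of multiplication passes from $\cM$ to $\tilcM$ by the usual $3\veps$ argument.

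For~\ref{li:extend-action} and~\ref{li:assoc}, one checks (also in~\cite{Nelson_NCI}) that the action $\cM\times\cH\to\cH$ is continuous for the measure topologies, hence extends by continuity to $\tilcM\times\tilcH\to\tilcH$. The compatibility~\ref{li:extend-action} is then immediate from the definition of the extension. The associativity identity~\ref{li:assoc} holds trivially on triples of the form $(\imath(T),\imath(S),\inch{\xi})$ with $T,S\in\cM$ and $\xi\in\cH$, and propagates to all of $\tilcM\times\tilcM\times\tilcH$ by separate continuity of the extended multiplication and action, together with density.

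The main step is~\ref{li:embed-L1}, where the key estimate is as follows. For $x\in\cM$ and $\veps>0$, let $p_\veps$ be the spectral projection of $|x|$ for the interval $(\veps,\infty)$; then
\[ \veps\tau(p_\veps) \leq \tau(|x|p_\veps) \leq \tau(|x|) = \onenorm{x}, \]
giving $\tau(p_\veps)\leq\onenorm{x}/\veps$, while the functional calculus yields $\norm{x(\id-p_\veps)}\leq\veps$. Consequently $\onenorm{x}<\veps\delta$ implies $x\in N(\veps,\delta)$, so $\imath:\cM\to\tilcM$ is continuous for the $L^1(\tau)$-topology and extends uniquely to a continuous linear map $\imath_0:L^1(\tau)\to\tilcM$ satisfying $\imath=\imath_0\imath_1$. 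The one point that requires more than this estimate is injectivity of~$\imath_0$, and I would address it by invoking the realization of $L^1(\tau)$ and $\tilcM$ as subspaces of the $*$-algebra of closed densely defined operators affiliated to~$\cM$: under this identification $L^1(\tau)$ sits inside $\tilcM$ as the subspace on which the canonical trace extension is finite, $\imath_0$ becomes the set-theoretic inclusion, and injectivity is tautological.
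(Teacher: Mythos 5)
Your proposal is correct and follows essentially the same route as the paper, whose proof simply cites Nelson's Theorems 1, 2 and 5 for parts (i)--(v). Your Chebyshev-type estimate for part (v) is a sound, self-contained justification of the continuity of $\imath_0$, but (as you acknowledge) the injectivity still rests on Nelson's identification of $L^1(\tau)$ with a concrete subspace of $\tilcM$ --- i.e.\ on the same citation the paper invokes --- so the two arguments do not genuinely diverge.
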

\begin{proof}
These all follow from the results of \cite{Nelson_NCI}. In more detail: part~\ref{li:sep-cts-alg}, and the claim that the action of $\cM$ on $\cH$ is separately continuous in the measure topologies of $\cM$ and $\cH$, follow from \cite[Theorem 1]{Nelson_NCI}.
Part~\ref{li:Hff} follows from \cite[Theorem 2]{Nelson_NCI}; parts~\ref{li:extend-action} and~\ref{li:assoc} follow because the map $(x,\eta)\mapsto x[\eta]$ is an extension by continuity of the given action of $\cM$ on $\cH$.
Finally, part~\ref{li:embed-L1} is the case $p=1$ of \cite[Theorem 5]{Nelson_NCI}.
\end{proof}

\begin{defn}
For each $t\in\tilcM$, define
\[ \Dom(M_t) \defeq \{ \xi\in\cH \st t[\inch{\xi}]\in\inch{\cH} \} \subseteq \cH;\]
and for each  $\xi\in\Dom(M_t)$, define $M_t\xi$ to be the unique vector in $\cH$ satisfying $\inch{(M_t\xi)} = t[\inch{\xi}]$ (uniqueness following from injectivity of the map $\cH \to \tilcH, \eta\mapsto \inch{\eta}$).
It is straightforward to check that the function $M_t:\Dom(M_t)\to\cH$ is linear.
\end{defn}

Recall that a closed densely-defined operator $T$ on $\cH$, with domain $\Dom(T)$, is said to be \dt{affiliated to~$\cM$} when, for each unitary $u\in \cM'$,
\REFCORR
we have $u(\Dom(T))= \Dom(T)$ and $Tu(\xi)=uT(\xi)$ for all $\xi\in\Dom(T)$.

\begin{thm}\label{t:densely-defined}
$\Dom(M_t)$ is a dense linear subspace of $\cH$, and $M_t:\Dom(M_t) \to\cH$ is a closed linear operator on $\cH$, affiliated to $\cM$. Moreover the domain of $M_t$ is~$\Dom(M_t)$ and its range is
\[ \Ran(M_t) \defeq M_t(\Dom(M_t)) = \{ M_t(\xi) \st \xi\in\Dom(M_t)\}. \]
\end{thm}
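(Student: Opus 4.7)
The operator $M_t$ is linear by construction, so three things must be shown: (a) $M_t$ is closed, (b) $\Dom(M_t)$ is dense in $\cH$, and (c) $M_t$ is affiliated to $\cM$. The unifying strategy is to push every question from $\cH$ up into $\tilcH$ and then exploit the three black-box facts from Theorem~\ref{t:Nelson}: separate continuity of the $\tilcM$-action on $\tilcH$, injectivity (hence Hausdorffness) of $\cH\to\tilcH$, and density of $\imath(\cM)$ in $\tilcM$. Closedness falls out immediately: if $\xi_k\in\Dom(M_t)$ with $\xi_k\to\xi$ and $M_t\xi_k\to\eta$ in $\cH$, then continuity of $\cH\to\tilcH$ gives $\inch{\xi_k}\to\inch\xi$ and $\inch{(M_t\xi_k)}\to\inch\eta$ in $\tilcH$, and separate continuity applied to the identity $t[\inch{\xi_k}]=\inch{(M_t\xi_k)}$ forces $t[\inch\xi]=\inch\eta\in\inch\cH$; thus $\xi\in\Dom(M_t)$ and $M_t\xi=\eta$.

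For affiliation, fix a unitary $u\in\cM'$. Since $u$ commutes with every projection in $\cM$ and preserves norms, it sends each neighbourhood $O(\veps,\delta)$ onto itself and so is a homeomorphism of $\cH$ in the measure topology; hence $u$ extends by continuity to a bijection $\widetilde{u}:\tilcH\to\tilcH$ with $\widetilde{u}(\inch\xi)=\inch{(u\xi)}$. For $T\in\cM$, Theorem~\ref{t:Nelson}\ref{li:extend-action} together with $Tu=uT$ yields $\imath(T)[\widetilde{u}(\inch\xi)]=\widetilde{u}(\imath(T)[\inch\xi])$ on the dense subset $\inch\cH\subseteq\tilcH$; continuity of $\widetilde{u}$ and separate continuity of the action extend this to all $\eta\in\tilcH$, and density of $\imath(\cM)$ in $\tilcM$ then propagates the commutation to $t[\widetilde{u}\eta]=\widetilde{u}(t[\eta])$ for every $t\in\tilcM$. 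Applied at $\eta=\inch\xi$ with $\xi\in\Dom(M_t)$, this exhibits $u\xi\in\Dom(M_t)$ with $M_t(u\xi)=uM_t\xi$; replacing $u$ by $u^*$ produces the opposite inclusion $u\Dom(M_t)\supseteq\Dom(M_t)$.

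The main obstacle is density of $\Dom(M_t)$, since this cannot be read off from the abstract properties listed in Theorem~\ref{t:Nelson} alone. I would invoke Nelson's spectral truncation (see \cite{Nelson_NCI}): for each $t\in\tilcM$ there is an increasing sequence of projections $p_n\in\cM$ with $\tau(\id-p_n)\to 0$ and $t\,\imath(p_n)=\imath(T_n)$ for some $T_n\in\cM$. Then for $\xi\in p_n\cH$, associativity (Theorem~\ref{t:Nelson}\ref{li:assoc}) together with part~\ref{li:extend-action} gives $t[\inch\xi]=t\,\imath(p_n)[\inch\xi]=\inch{(T_n\xi)}\in\inch\cH$, so $p_n\cH\subseteq\Dom(M_t)$ with $M_t\xi=T_n\xi$. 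It remains to check that $\bigcup_n p_n\cH$ is dense in $\cH$, i.e.\ that $p_n\to\id$ in the strong operator topology: via the identification $\cM_*\iso L^1(\tau)$, every vector state $\omega_\xi=\pair{\,\cdot\,\xi}{\xi}$ has the form $\tau(\,\cdot\, h)$ for some positive $h\in L^1(\tau)$; approximating $h$ in $\onenorm{\cdot}$-norm by elements of $\cM$ and using the standard $L^1$--$L^\infty$ duality estimate against $\tau(\id-p_n)$ then forces $\norm{(\id-p_n)\xi}^2=\omega_\xi(\id-p_n)\to 0$ for every $\xi\in\cH$, as required.
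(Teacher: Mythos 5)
Your proposal is correct in its logic, but it takes a completely different route from the paper: the paper offers no argument at all for this statement, simply citing \cite[Theorem 4]{Nelson_NCI}, whereas you reconstruct a proof from the black-box properties listed in Theorem~\ref{t:Nelson}. Your closedness argument (push the graph into $\tilcH$, use separate continuity of the action and injectivity of $\xi\mapsto\inch{\xi}$ to identify the limit) and your affiliation argument (observe that a unitary $u\in\cM'$ preserves each neighbourhood $O(\veps,\delta)$, hence extends to a homeomorphism $\widetilde{u}$ of $\tilcH$ intertwining the $\tilcM$-action, by a two-step density argument first in $\eta$ then in $t$) are both complete and rigorous given only the stated properties of $\tilcM$ and $\tilcH$; these parts genuinely add content that the paper leaves implicit. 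The one caveat is the density step: the truncation fact you import --- projections $p_n$ with $\tau(\id-p_n)\to 0$ and $t\,\imath(p_n)\in\imath(\cM)$ --- is not among the listed black-box properties and is essentially the technical core of Nelson's own proof of his Theorem 4 (it requires extracting, from a measure-topology Cauchy sequence in $\cM$ representing $t$, a single projection of small co-trace on which the sequence is norm-Cauchy). So strictly speaking your argument reduces the theorem to that lemma rather than proving it outright; but you flag the citation honestly, everything built on top of it (including the $L^1$--$L^\infty$ duality argument showing $p_n\to\id$ strongly, which correctly uses faithfulness and normality of $\tau$ via the identification $\cM_*\iso L^1(\tau)$) is sound, and what you import is no more than what the paper itself delegates to \cite{Nelson_NCI}. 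The final clause of the statement about $\Ran(M_t)$ is a definition and needs no proof, so your omission of it is harmless.
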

\begin{proof}
See \cite[Theorem 4]{Nelson_NCI}.
\end{proof}

The following is really just an observation, but has been isolated as a lemma for convenient reference.
\begin{lem}\label{l:commuting}
Let $S\in\cM$, $t\in\tilcM$, and suppose $\imath(S)t=t\imath(S)$.
Then $S(\Dom(M_t))\subseteq \Dom(M_t)$ and $S(\Ran(M_t))\subseteq \Ran(M_t)$.
Moreover,
$M_tS(\xi) = SM_t(\xi)$ for each $\xi\in\Dom(M_t)$.
\end{lem}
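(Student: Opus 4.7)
The strategy is to use the extension and associativity properties given in Theorem~\ref{t:Nelson}, parts~\ref{li:extend-action} and~\ref{li:assoc}, to transfer the commutation relation $\imath(S)t = t\imath(S)$ in $\tilcM$ into a pointwise commutation of $S$ and $M_t$ on $\Dom(M_t)$.

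First I would fix $\xi \in \Dom(M_t)$, so that by definition $t[\inch{\xi}] = \inch{(M_t\xi)}$. Using part~\ref{li:extend-action} to rewrite $\inch{(S\xi)} = \imath(S)[\inch{\xi}]$, then chaining the associativity~\ref{li:assoc}, the hypothesis $\imath(S)t = t\imath(S)$, and another application of~\ref{li:extend-action}, I would compute
\[
t[\inch{(S\xi)}] \;=\; t[\imath(S)[\inch{\xi}]] \;=\; (t\imath(S))[\inch{\xi}] \;=\; (\imath(S)t)[\inch{\xi}] \;=\; \imath(S)[\inch{(M_t\xi)}] \;=\; \inch{(SM_t\xi)}.
\]
Since $\inch{(SM_t\xi)}$ lies in $\inch{\cH}$, the definition of $\Dom(M_t)$ immediately yields $S\xi \in \Dom(M_t)$, and the injectivity of $\eta\mapsto\inch{\eta}$ from Theorem~\ref{t:Nelson}\ref{li:Hff} then forces $M_t(S\xi) = SM_t\xi$. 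Together these give the first and third assertions of the lemma.

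For the inclusion $S(\Ran(M_t))\subseteq \Ran(M_t)$, I would just observe that any $\eta\in\Ran(M_t)$ has the form $\eta = M_t\xi$ with $\xi\in\Dom(M_t)$, so that $S\eta = SM_t\xi = M_t(S\xi)$ with $S\xi\in\Dom(M_t)$ by what has already been proved; hence $S\eta\in\Ran(M_t)$.

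There is no real obstacle here: the lemma is just a matter of unpacking definitions and invoking the properties collected in Theorem~\ref{t:Nelson}. The only subtlety to watch for is keeping track of which bracket denotes multiplication in $\tilcM$ and which denotes the action on $\tilcH$, so that each use of~\ref{li:assoc} or~\ref{li:extend-action} is properly justified, and remembering that the final equality of vectors in $\cH$ is extracted from an equality in $\tilcH$ via the injectivity in~\ref{li:Hff}.
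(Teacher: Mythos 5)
Your proposal is correct and follows essentially the same route as the paper: the same chain of equalities using Theorem~\ref{t:Nelson}\ref{li:extend-action} and~\ref{li:assoc} together with the hypothesis $\imath(S)t=t\imath(S)$, concluding via injectivity of $\eta\mapsto\inch{\eta}$. Your explicit derivation of $S(\Ran(M_t))\subseteq\Ran(M_t)$ merely spells out what the paper dismisses with ``the rest follows.''
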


\begin{proof}
Let $\xi\in\Dom(M_t)$. Then
\[ \begin{aligned}
t[\inch{(S\xi)}]  & = t[\imath(S)[\inch{\xi}]] & \quad\text{(by Theorem \ref{t:Nelson}\ref{li:extend-action})} \\
		& = t\imath(S)[\inch{\xi}] & \quad\text{(by Theorem \ref{t:Nelson}\ref{li:assoc})} \\
		& = \imath(S)t[\inch\xi] \\
		& = \imath(S)[\inch{(M_t\xi)}]  & \quad\text{(by the definition of $M_t$)} \\
		& = \inch{(SM_t\xi)}	& \quad\text{(by Theorem \ref{t:Nelson}\ref{li:extend-action})}.
\end{aligned} \]
This shows that $S(\Dom(M_t))\subseteq \Dom(M_t)$, and (by definition of $M_t$) that $M_tS$ agrees with $SM_t$ on $\Dom(M_t)$. The rest follows.
\end{proof}

%\subsection{Idempotents in $\cM$ and $\tilcM$}
Of particular importance in the proof of Theorem~\ref{t:mainthm} is the ability to manipulate idempotents in $\tilcM$. The following lemma is essentially taken from parts of \cite[Section~2]{Schultz_JFA06}.

\begin{lem}[cf.~{\cite[Proposition 2.4]{Schultz_JFA06}}]
\label{l:unbdd-idem}
Let $e\in \tilcM$ be an idempotent.
\XTRACORR\label{x:cut}
Then $\Ran(M_e)$ is a closed subspace of $\cH$, satisfying
\begin{equation}\label{eq:dom-ran}
\Ran(M_e)=\{\xi\in\cH \st e[\inch\xi]=\inch{\xi}\} \subseteq \Dom(M_e)
\end{equation}
so that $M_e\eta=\eta$ for all $\eta\in \Ran(M_e)$.
Moreover, the orthogonal projection onto $\Ran(M_e)$ lies in~$\cM$.
\end{lem}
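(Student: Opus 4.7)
Let me denote $V := \{\xi \in \cH \st e[\inch{\xi}] = \inch{\xi}\}$; the goal is to identify $V$ with $\Ran(M_e)$, show it is closed, and show the projection onto it lies in $\cM$.

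\textbf{Step 1: $V \subseteq \Dom(M_e)$ with $M_e$ acting as the identity.} If $\xi \in V$, then $e[\inch{\xi}] = \inch{\xi} \in \inch{\cH}$, so $\xi \in \Dom(M_e)$; by injectivity of $\eta \mapsto \inch{\eta}$ (Theorem~\ref{t:Nelson}\ref{li:Hff}), $M_e\xi = \xi$. Hence $V \subseteq \Ran(M_e)$.

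\textbf{Step 2: $\Ran(M_e) \subseteq V$.} Given $\eta = M_e\xi$ with $\xi \in \Dom(M_e)$, we have $\inch{\eta} = e[\inch{\xi}]$. Applying $e$ again and using associativity (Theorem~\ref{t:Nelson}\ref{li:assoc}) together with $e^2 = e$,
\[ e[\inch{\eta}] = e[e[\inch{\xi}]] = (e \cdot e)[\inch{\xi}] = e[\inch{\xi}] = \inch{\eta}, \]
so $\eta \in V$. Combined with Step~1, this gives $V = \Ran(M_e)$, and the asserted equation $M_e\eta = \eta$ for $\eta \in \Ran(M_e)$ is clear.

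\textbf{Step 3: $V$ is closed in $\cH$.} By separate continuity of the $\tilcM$-action on $\tilcH$ (Theorem~\ref{t:Nelson}), the left-multiplication by the fixed element $e$ is a continuous map $\tilcH \to \tilcH$; since $\xi \mapsto \inch{\xi}$ is also continuous, the map $\xi \mapsto e[\inch{\xi}] - \inch{\xi}$ is continuous from $\cH$ to $\tilcH$. Because $\tilcH$ is Hausdorff (it is metrizable), $V$, being the preimage of $\{0\}$, is closed. (Alternatively, $V$ is the kernel of the closed operator $M_e - I$ on $\Dom(M_e)$.)

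\textbf{Step 4: The projection onto $V$ lies in $\cM$.} Since $M_e$ is affiliated to $\cM$ (Theorem~\ref{t:densely-defined}), for every unitary $u \in \cM'$ we have $u(\Dom(M_e)) = \Dom(M_e)$ and $M_e u = u M_e$ on $\Dom(M_e)$. For $\xi \in V$, this yields $M_e(u\xi) = uM_e\xi = u\xi$, so $u\xi \in V$; applying this to $u^*$ as well gives $u(V) = V$, and hence $u(V^\perp) = V^\perp$. Therefore the orthogonal projection $P$ onto $V$ commutes with every unitary in $\cM'$. Since every element of the $C^*$-algebra $\cM'$ is a linear combination of its unitaries, $P \in (\cM')' = \cM$ by the double commutant theorem.

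The steps are all routine once one has Theorem~\ref{t:Nelson} in hand; the only mild subtlety is Step~2, where one must be careful to invoke the associativity statement~\ref{li:assoc} in order to convert the set-theoretic idempotency $e^2 = e$ into the pointwise identity $e[e[\inch\xi]] = e[\inch\xi]$ in $\tilcH$.
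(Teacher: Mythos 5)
Your proof is correct and follows essentially the same route as the paper's: the same use of idempotency plus the associativity property of Theorem~\ref{t:Nelson}\ref{li:assoc} to establish~\eqref{eq:dom-ran}, separate continuity of the $\tilcM$-action for closedness, and affiliation of $M_e$ together with the bicommutant theorem for the final claim. The only differences are cosmetic (you phrase closedness as the preimage of a closed set rather than via an explicit sequence argument, and you pass through $u(V^\perp)=V^\perp$ instead of the paper's $pup=up$ manipulation).
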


%Since we need slightly more detail than is explicitly stated in~\cite[Proposition 2.4]{Schultz_JFA06}, we give a quick proof of the lemma.

\begin{proof}
Let $e\in\widetilde{\cM}$ be idempotent.
Firstly, if $\eta\in\Dom(M_e)$ then $M_e\eta\in\cH$ and
\begin{equation}\label{eq:Eccleston}
e[\inch{(M_e\eta)}] = e[e[\inch{\eta}]] = e[\inch{\eta}] = \inch{(M_e\eta)}
\tag{$*$}
\end{equation}
thus $\Ran(M_e)\subseteq\{\xi\in\cH \st e[\inch{\xi}] =\inch{\xi}\}\subseteq \Dom(M_e)$.
We also have $\{\xi\in\cH \st e[\inch{\xi}]= \inch{\xi} \}\subseteq M_e(\Dom(M_e))=\Ran(M_e)$ and thus \eqref{eq:dom-ran} holds.
It also follows from \eqref{eq:Eccleston} that $M_eM_e\xi=M_e\xi$ for all $\xi\in\Dom(e)$.

The fact that $\Ran(M_e)$ is closed is a special case of \cite[Proposition 2.4]{Schultz_JFA06}. We give a slightly different argument.
Let $(\xi_n)\subset\Dom(e)$ be a sequence such that $(M_e\xi_n)$ converges to some $\eta\in\cH$. Then $e[\inch{\xi_n}] \to \inch{\eta}$ in $\tilcH$; but since the action of $\tilcM$ on $\tilcH$ is separately continuous as a bilinear map $\tilcM\times\tilcH\to\tilcH$, we have 
\[ \lim_n e[\inch{\xi_n}]= \lim_n e[e[\inch{\xi_n}]] = e[\lim_n e[\inch{\xi_n}]] =e[\inch{\eta}]. \]
Hence $\inch{\eta} = e[\inch{\eta}]$, so by \eqref{eq:dom-ran} $\eta\in\Ran(M_e)$ as required.

%% older version as in Schultz's paper
%% By Theorem~\ref{t:densely-defined}, $M_e$ is a closed operator on $\cH$ with domain $\Dom(M_e)$ and range $\Ran(M_e)$; so $\Dom(M_e)\times\Ran(M_e)$ is closed in $\cH\times \cH$. Let $(\xi_n)$ be a sequence in $\Dom(M_e)$ such that $M_e\xi_n\to \eta\in\cH$. Then $((M_e\xi_n,M_e\xi_n))$ is a sequence in the graph of $M_e$ which converges to $(\eta,\eta)$ in $\cH\times\cH$, and hence $(\eta,\eta)$ is in the graph of $M_e$, so that $\eta=M_e\eta\in\Ran(M_e)$, showing that $\Ran(M_e)$ is closed as required.

Finally, let $p$ be the orthogonal projection from $\cH$ onto $\Ran(M_e)$. We wish to show $p\in\cM$; while this seems to be well known, 
I was unable to find a precise reference.
The following argument was communicated to me by M.~Argerami~(\cite{MO36605}).
It suffices (by the bicommutant theorem) to show that $pu=up$ for all $u\in\cM'$. But since $M_e$ is affiliated to $\cM$, for each such $u$ we have
$u(\Dom(M_e)) = \Dom(M_e)$ and $uM_e=M_eu$ on $\Dom(M_e)$.
Therefore
\[ u(\Ran(M_e))=uM_e(\Dom(M_e))=M_eu(\Dom(M_e)) = M_e(\Dom(M_e))=\Ran(M_e) \,,\]
showing that $pup=up$ in $\Bdd(\cH)$.
Repeating this argument with $u$ replaced by $u^*$ gives $pu^*p=u^*p$ in $\Bdd(\cH)$, and taking adjoints we get $pup=pu$, so that $up=upu=pu$ as required.
\end{proof}

\end{section}

\begin{section}{An automatic boundedness result for certain idempotents}\label{s:Gifford-plus}
In our eventual proof of Theorem~\ref{t:mainthm}, a key idea will be the following result.

%% TYPO ORIGINALLY CAUGHT BY DAWS
\begin{defn}[cf.~{\cite[Lemma 3.7]{Marcoux_JLMS08}}]
Let $B$ be an algebra and let $\cF$ be a set of mutually commuting idempotents in $B$. We say that $\cF$ is \dt{closed under symmetric differences} if $E+F-2EF\in\cF$ for every $E,F\in\cF$.
\end{defn}

\begin{lem}[Unitarization lemma]\label{l:unitarize-idem}
Let $\cH$ be a Hilbert space, and let $\cF\subseteq \Bdd(\cH)$ be a set of mutually commuting idempotents which is closed under symmetric differences. Suppose that $\norm{2e-I}\leq C$ for all $e\in\cF$. Then there exists $R\in\Bdd(\cH)$, positive and invertible with $\norm{R}\norm{R^{-1}}\leq C^2$, such that $ReR^{-1}$ is self-adjoint for each $e\in\cF$. Moreover, $R$ lies in the von Neumann algebra generated by $I$ and~$\cF$. 
\end{lem}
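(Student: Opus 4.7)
The plan is a Dixmier-style averaging argument over the multiplicative group generated by the symmetries $2e - \id$. If $\cF$ is empty, take $R = \id$; otherwise $0 = e\Delta e \in \cF$ for any $e \in \cF$, so $-\id \in \{2e - \id : e \in \cF\}$. The identity
\[
-(2e - \id)(2f - \id) = 2(e \Delta f) - \id \qquad (e \Delta f := e + f - 2ef)
\]
together with closure of $\cF$ under symmetric differences shows that
\[
G := \{\pm (2e - \id) : e \in \cF\} \subseteq \Bdd(\cH)
\]
is a multiplicative subgroup. Each $g \in G$ squares to $\id$, so $g^{-1} = g$; $G$ is abelian because $\cF$ is; and $\norm{g} \leq C$ for every $g \in G$.

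The next step is to average. For each $g \in G$ the positive operator $g^*g$ satisfies $C^{-2}\id \leq g^*g \leq C^2 \id$: the upper bound is clear, and for the lower bound note that $g^*$ is also an involution, so $(g^*g)^{-1} = g^{-1}(g^*)^{-1} = gg^*$, giving $\norm{(g^*g)^{-1}} = \norm{gg^*} = \norm{g}^2 \leq C^2$. Let $K$ be the \wstar-closed convex hull of $\{g^*g : g \in G\}$ inside $\Bdd(\cH)$. Then $K$ is \wstar-compact, and every $T \in K$ still satisfies $C^{-2}\id \leq T \leq C^2 \id$. The abelian group $G$ acts on $\Bdd(\cH)$ affinely and \wstar-continuously by $g \cdot T := g^* T g$, and preserves $K$ because $g \cdot (h^*h) = (hg)^*(hg)$. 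By the Markov--Kakutani fixed-point theorem there is a common fixed point $R^2 \in K$.

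Finally, set $R := (R^2)^{1/2}$ via continuous functional calculus, so $C^{-1}\id \leq R \leq C\id$ and $\norm{R}\norm{R^{-1}} \leq C^2$. The fixed-point identity $g^* R^2 g = R^2$ rearranges, using $g^{-1} = g$ and $R = R^*$, to
\[
(R g R^{-1})^* = R^{-1} g^* R = R g R^{-1},
\]
so $R g R^{-1}$ is self-adjoint for each $g \in G$; applied to $g = 2e - \id$, this gives $R e R^{-1}$ self-adjoint for every $e \in \cF$. For the membership assertion, each $g^*g$ is a polynomial in $e$ and $e^*$, so it lies in the von Neumann algebra $\mathcal{N} := (\cF \cup \{\id\})''$; since $\mathcal{N}$ is \wstar-closed and convex, $R^2 \in \mathcal{N}$, and hence $R \in \mathcal{N}$ by continuous functional calculus inside~$\mathcal{N}$. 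The one point that needs care is the passage from finite convex combinations to a genuine $G$-fixed point when $G$ is infinite: this is precisely where amenability (abelianness) of $G$ enters, via Markov--Kakutani, after which the remaining verifications are mechanical.
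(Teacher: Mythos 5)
Your proof is correct, but it reaches the conclusion by a different route from the paper. The paper deduces the lemma from a quantitative Day--Dixmier theorem: it forms the bounded abelian group $G=\{I-2E \st E\in\cF\}$, fixes a left-invariant mean $\Lambda$ on $L^\infty(G)$, averages the quadratic forms $g\mapsto \norm{\pi(g)\xi}^2$ to produce an invariant positive operator $T$, and takes a square root; membership in the von Neumann algebra is obtained by showing $T$ commutes with every unitary in $\pi(G)'$ and invoking the double commutant theorem. You form essentially the same group (adjoining $\pm$, which is harmless since $0\in\cF$ forces $-I$ into your set) but replace the invariant mean by a Markov--Kakutani fixed point in the \wstar-compact convex set $\overline{\operatorname{co}}^{w*}\{g^*g\}$; this is a legitimate and self-contained alternative, and in effect it inlines the proof that abelian groups are amenable rather than quoting it, at the cost of losing the generality of the paper's Theorem~\ref{t:day-dixmier-plus} (which covers arbitrary amenable $G$ and is stated as a result of independent interest). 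Your verifications -- the two-sided bound $C^{-2}\id\leq g^*g\leq C^2\id$ via $(g^*g)^{-1}=gg^*$, the \wstar-continuity and commutativity of the affine maps $T\mapsto g^*Tg$, and the rearrangement $g^*R^2=R^2g$ giving self-adjointness of $RgR^{-1}$ -- are all sound. One small imprecision: you write $\mathcal{N}:=(\cF\cup\{\id\})''$ and claim $g^*g\in\mathcal{N}$ because it is a polynomial in $e$ and $e^*$; but the double commutant of a non-self-adjoint set need not contain adjoints, so $\mathcal{N}$ should be taken to be $(\cF\cup\cF^*\cup\{\id\})''$, i.e.\ the von Neumann algebra generated by $I$ and $\cF$ -- which is exactly what the lemma asserts, so nothing is lost; your argument that this algebra is \wstar-closed and convex, hence contains $K$ and then $R$ by functional calculus, then goes through verbatim.
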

\XTRACORR\label{x:dixmier}

Lemma~\ref{l:unitarize-idem} is by no means a new observation, but
it semems hard to pin down a precise citation.
Without the constants and the statement about von Neumann algebras, it can be found on \cite[pp.~222--223]{Dixmier_unitarizable}; there, Dixmier remarks that the result is related to previous work of Lorch
(see also the remarks in~\cite{Wil_amenop}).
A quantitative version can be found in~\cite[Lemma 3.8]{Marcoux_JLMS08}, but once again there is no mention that $R$ lies in a certain von Neumann algebra.
Therefore, we shall prove Lemma~\ref{l:unitarize-idem} from scratch, using a more general result.

\begin{thm}[Day; Dixmier]\label{t:day-dixmier-plus}
Let $G$ be a locally compact amenable group, and let $\pi: G \to \Bdd(\cH)$ be a uniformly bounded representation of $G$ on some Hilbert space $\cH$. Then there exists a positive, invertible operator $R$ in the von Neumann algebra generated by $\pi(G)$, satisfying $\norm{R}\norm{R^{-1}} \leq \left(\sup_x \norm{\pi(x)}\right)^2$, such that $R\pi(x)R^{-1}$ is unitary for each $x\in G$.
\end{thm}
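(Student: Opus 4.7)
The strategy is the classical Day--Dixmier averaging argument: use amenability to replace the inner product on $\cH$ by an equivalent $\pi$-invariant one, then take $R$ to be the square root of the positive invertible operator relating the two inner products. Set $C \defeq \sup_{x\in G}\norm{\pi(x)}$. Since $\pi$ is a representation, $\pi(x^{-1})=\pi(x)^{-1}$, giving the two-sided bound
\[
C^{-1}\norm{\xi} \;\leq\; \norm{\pi(x)\xi} \;\leq\; C\norm{\xi} \qquad (\xi\in\cH,\ x\in G).
\]

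The first step is to pick a left-invariant mean $m$ on $L^\infty(G)$ (existing by amenability) and, for $\xi,\eta\in\cH$, to define
\[
[\xi,\eta] \defeq m_x\bigl(\ip{\pi(x)\xi}{\pi(x)\eta}\bigr).
\]
The integrand is bounded (by $C^2\norm{\xi}\norm{\eta}$) and lies in $L^\infty(G)$ assuming the usual strong-continuity/measurability of~$\pi$. One checks routinely that $[\cdot,\cdot]$ is a sesquilinear form satisfying $C^{-2}\norm{\xi}^2\le [\xi,\xi]\le C^2\norm{\xi}^2$, and left invariance of $m$ together with the homomorphism property of~$\pi$ yields $[\pi(y)\xi,\pi(y)\eta]=[\xi,\eta]$ for every $y\in G$.

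By Riesz representation there is a positive invertible $T\in\Bdd(\cH)$ with $C^{-2}\id\le T\le C^2\id$ satisfying $[\xi,\eta]=\ip{T\xi}{\eta}$, and the invariance property translates to the identity $\pi(y)^*T\pi(y)=T$ for all $y\in G$. I then set $R \defeq T^{1/2}$. Since $\norm{R}\le C$ and $\norm{R^{-1}}\le C$, we get $\norm{R}\norm{R^{-1}}\le C^2$; and rearranging $\pi(y)^*T\pi(y)=T$ gives $(R\pi(y)R^{-1})^*(R\pi(y)R^{-1})=\id$, so $R\pi(y)R^{-1}$ is unitary for all $y\in G$.

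The step requiring care -- and which the author has tacked on beyond the standard statement -- is to show that $R$ lies in the von Neumann algebra $\cN$ generated by $\pi(G)$, i.e.\ in $(\pi(G)\cup\pi(G)^*)''$. By the bicommutant theorem and the spectral theorem, it suffices to show $T$ commutes with every $u\in (\pi(G)\cup\pi(G)^*)'$. For such a $u$, both $u$ and $u^*$ commute with each $\pi(x)$, so for $\xi,\eta\in\cH$:
\[
\ip{Tu\xi}{\eta} = m_x\ip{\pi(x)u\xi}{\pi(x)\eta} = m_x\ip{\pi(x)\xi}{\pi(x)u^*\eta} = \ip{T\xi}{u^*\eta} = \ip{uT\xi}{\eta},
\]
so $Tu=uT$, as required. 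I expect this last ``$T\in\cN$'' verification to be the main subtlety, since it forces one to invoke the full symmetric commutant $(\pi(G)\cup\pi(G)^*)'$ rather than merely $\pi(G)'$; the rest is bookkeeping with the invariant mean.
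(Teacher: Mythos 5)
Your proposal is correct and takes essentially the same route as the paper: average the inner product against a left-invariant mean, extract the positive invertible $T$ with $\pi(y)^*T\pi(y)=T$, take its square root to get $R$ with $\norm{R}\norm{R^{-1}}\leq C^2$, and establish membership in the von Neumann algebra by commuting $T$ with the commutant. The only cosmetic differences are that the paper works with the quadratic form $\xi\mapsto\Lambda\bigl(\norm{\pi(\cdot)\xi}^2\bigr)$ and polarizes afterwards, and tests commutation only against unitaries in $\pi(G)'$ (which suffices, as these are exactly the unitaries of the von Neumann algebra $(\pi(G)\cup\pi(G)^*)'$), whereas you polarize at the outset and test against the full symmetric commutant directly.
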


For sake of completeness we provide a full proof, using a clasical averaging argument which is independently due to Day~\cite[Theorem~8]{Day_TAMS50} and Dixmier~({\it ibid.}\/). Our approach follows Dixmier's closely, and is guided by an outline given in~\cite{Pis_Tohoku07}. 

\begin{proof}
Let $C\defeq \sup_{g\in G} \norm{\pi(g)}$, and fix a left-invariant mean $\Lambda$ on $L^\infty(G)$. For each $\xi\in \cH$ define $f_\xi:G\to \Cplx$ by $f_\xi(g) = \norm{\pi(g)\xi}^2$; then $f_\xi\in L^\infty(G)$, with
\[ C^{-2}\norm{\xi}^2 \leq f_\xi(g) \leq C^2\norm{\xi} \quad\text{ for all $g\in G$.} \] 
Define $F(\xi)\defeq\Lambda(f_\xi)$; then since $\Lambda$ is positive, $F$ is a positive quadratic form on $\cH$ satisfying
\begin{equation}\label{eq:Tremlett}
C^{-2}\norm{\xi}^2 \leq F(\xi) \leq C^2\norm{\xi} \quad\text{ for all $\xi\in \cH$.}
\end{equation} 
and hence (Riesz--Fischer plus polarization) there exists a unique positive linear operator $T:\cH\to\cH$ such that $F(\xi)=\ip{T\xi}{\xi}$ for all $\xi\in\cH$.
By \eqref{eq:Tremlett}, $T$ is invertible with $\norm{T}\leq C^2$ and $\norm{T^{-1}}\leq C^2$. Moreover, left-invariance of $\Lambda$, together with the fact that $h\cdot f_\xi = f_{\pi(h)\xi}$ for all $h\in G$ and $\xi\in\cH$, yields
\begin{equation}\label{eq:invar}
\ip{T\pi(h)\xi}{\pi(h)\xi} = \Lambda( f_{\pi(h)\xi}) = \Lambda(f_\xi) = \ip{T\xi}{\xi} \quad\text{ for all $\xi\in\cH$.}
\end{equation}
We therefore take $R=T^{-1/2}$; clearly it satisfies the required norm bounds, and it follows easily from \eqref{eq:invar} that $R\pi(h)R^{-1}$ is unitary for each $h\in G$. Finally, if $U\in\pi(G)'$ is a unitary operator commuting with all $\pi(g)$, then for each $\xi\in\cH$ we have $f_{U\xi} = f_\xi$ and hence
$\ip{U^*TU\xi}{\xi} = F(U\xi) = F(\xi) = \ip{T\xi}{\xi}$. Therefore $U^*TU=T$ and so $U^*RU=R$, implying that $R$ commutes with every unitary in $\pi(G)'$, and hence by the double commutant theorem that $R$ lies in the von Neumann algebra generated by $\pi(G)$. 
\end{proof}

Lemma~\ref{l:unitarize-idem} now follows from Theorem~\ref{t:day-dixmier-plus} by taking $G=\{I-2E \st E \in \cF\}$; for the conditions on $\cF$ ensure that $G$ is a bounded commutative subgroup of $\Bdd(\cH)$, and if $R(I-2E)R^{-1}$ is unitary then $RER^{-1}$ must be self-adjoint.

\begin{rem}\label{r:more-than-day-dixmier}
Neither of the articles~\cite{Day_TAMS50,Dixmier_unitarizable} state explicitly that the operator $R$ implementing the similarity can be taken to lie in the von Neumann algebra generated by $\pi(G)$.
The observation may well be folklore: it \emph{is} explicitly mentioned in the introductory parts of~\cite{Pis_Tohoku07}.
\end{rem}

In view of Lemma~\ref{l:unitarize-idem}, we have a strategy for showing that a given commutative operator algebra is similar to a self-adjoint one: try to show that its WOT-closure contains a WOT-dense, bounded set of idempotents that is closed under symmetric differences, and then apply Lemma~\ref{l:unitarize-idem}  (note that the idempotents need not lie in the original algebra). The problem is that in our setting, it is not clear how to produce such idempotents directly; we shall instead first produce a suitable family of commuting idempotents in the larger algebra $\tilcM$, and then show that in fact they must lie in $\imath(\cM)$, using the following technical result.

\begin{prop}\label{p:Gifford-plus}
Let $\fA$ be a subalgebra of $\cM$ which is operator amenable with constant $\leq K$.
Let $e\in\tilcM$ be an idempotent, and suppose there is a sequence $(b_n)$ in the centre of $\fA$ such that $\imath(b_n)\to e$ in $\tilcM$.
Then $\norm{M_e(\xi)}\leq K\norm{\xi}$ for all $\xi\in\Dom(M_e)$, and we may therefore identify $e$ with an idempotent in $\cM$ of norm at most~$K$.
Moreover, if $I$ lies in the WOT-closure of $\fA$, then $\norm{2e-I}\leq K$.
\end{prop}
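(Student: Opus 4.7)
The plan is to produce, using the elementary operators from Lemma~\ref{l:bypassing-op-amen}, a bounded operator $\bar E \in \cM$ which acts as $M_e$ on $\Dom(M_e)$; closedness of $M_e$ (Theorem~\ref{t:densely-defined}) will then upgrade this to $M_e = \bar E$ on all of $\cH$, giving the desired norm bound. First I would record two preliminary facts: (a) since each $b_n$ lies in $Z(\fA)$, separate continuity of multiplication in $\tilcM$ shows that $e$ commutes with every element of $\imath(\fA)$; (b) letting $p \in \cM$ be the orthogonal projection onto $\Ran(M_e)$ provided by Lemma~\ref{l:unbdd-idem}, one verifies directly from $e^2 = e$ and the defining characterisation of $\Ran(M_e)$ that $\imath(p)\,e = e$ and $e\,\imath(p) = \imath(p)$ in $\tilcM$.

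Next I would apply Lemma~\ref{l:bypassing-op-amen} to obtain the elementary operators $\bE_\al$ of norm $\leq K$ and the bounded approximate identity $(u_\al) \subseteq \fA$. The net $(\bE_\al(p))_\al \subseteq \cM$ is bounded by $K$, so by weak$^*$-compactness it admits a cluster point $\bar E \in \cM$ with $\norm{\bar E} \leq K$. Three features of $\bar E$ follow immediately: its range lies in $\Ran(M_e)$ (because $\fA$-invariance of $\Ran(M_e)$ forces $\bE_\al(p) = p\,\bE_\al(p)$); it commutes with $\fA$ (by the asymptotic centrality \ref{li:app-central1}); and for $\eta \in \Ran(M_e)$ one has $\bE_\al(p)\eta = u_\al \eta$, by a short direct computation using the $\fA$-invariance of $\Ran(M_e)$.

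The main obstacle, and the heart of the argument, is to prove $\bar E \xi = M_e \xi$ for every $\xi \in \Dom(M_e)$, or equivalently that $\imath(\bar E) = e$ in $\tilcM$. To this end, extend each $\bE_\al$ to act on $\tilcM$ and combine two complementary computations: using the centrality of $e$, $\bE_\al(e) = \imath(u_\al) e$ in $\tilcM$; using the centrality of $b_n$, $\bE_\al(\imath(b_n)) = \imath(u_\al b_n)$ in $\imath(\cM)$. Together with the convergence $\imath(b_n) \to e$ in $\tilcM$, the BAI property of $(u_\al)$, and the uniform bound $\norm{\bE_\al} \leq K$, a diagonal/double-limit argument in $(n,\al)$ should realise both $e$ and $\imath(\bar E)$ as limits of the common family $\imath(b_n u_\al)$ in $\tilcM$, forcing equality; this identification step is where the technical work lies. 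Once it is complete, $\bar E$ is a bounded extension of $M_e$ from its dense domain, and closedness of $M_e$ forces $\Dom(M_e) = \cH$ and $M_e = \bar E$, so $\norm{M_e} \leq K$.

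For the moreover statement: if $I$ lies in the WOT-closure of $\fA$, a standard Hahn--Banach argument (weak closures of convex sets equal norm closures) gives $\clos{\fA\cH} = \cH$, whence the bounded approximate identity $(u_\al)$ converges to $I$ strongly, and hence weak$^*$ in $\cM$. Applying $\bE_\al$ to the self-adjoint contraction $2p - I \in \cM$ (of norm $1$, since $p$ is an orthogonal projection) yields $\norm{2\bE_\al(p) - u_\al} \leq K$; passing to the weak$^*$-limit gives $\norm{2\bar E - I} \leq K$, which is $\norm{2e - I} \leq K$ under the identification $\imath(\bar E) = e$.
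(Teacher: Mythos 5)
Your setup coincides with the paper's: the same cluster point $\bar E$ of $(\bE_\al(p))$ (the paper's $Q$), the same preliminary identities ($e$ commuting with $\imath(\fA)$, $p\in\cM$, $\bE_\al(p)=p\,\bE_\al(p)$, and $\bE_\al(p)\eta=u_\al\eta$ for $\eta\in\Ran(M_e)$), and the same endgame via closedness/density of $\Dom(M_e)$ and the weak$^*$ limit of $\bE_\al(2p-I)$ for the ``moreover'' clause. The genuine gap is exactly where you say ``this is where the technical work lies'', and the strategy you sketch there would not close it. To realise $\imath(\bar E)$ as an iterated limit of the family $\imath(b_nu_\al)$ in $\tilcM$ you would have to push the weak$^*$ cluster point $\bar E$ of $(\bE_\al(p))$ through $\imath$ into the measure topology; but $\imath=\imath_0\imath_1$ is continuous for the norm topology of $\cM$, not for its weak$^*$ topology, so weak$^*$ convergence of a subnet of $(\bE_\al(p))$ gives no information about convergence of $(\imath(\bE_\al(p)))$ in $\tilcM$. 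Moreover $\bE_\al(p)$ is built from $p$, and the only link you have between it and the data $(b_n)$ defining $e$ is the identity $\bE_\al(p)\eta=u_\al\eta$ on the subspace $\Ran(M_e)$; there is no norm or measure estimate relating $\bE_\al(p)$ to $b_nu_\al$ on all of $\cH$, so the two iterated limits you propose to compare are limits of genuinely different nets, taken in incompatible topologies.

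The paper avoids this by never passing a weak$^*$ limit through $\imath$; instead it proves two vector-level identities on $\cH$. First, $M_e\bar E\xi=\bar E\xi$ for all $\xi\in\Dom(M_e)$: this follows from $p\bar E=\bar E$ (your observation that the range of $\bar E$ lies in $\Ran(M_e)$) together with the fact that $M_e$ fixes $\Ran(M_e)$ pointwise (Lemma~\ref{l:unbdd-idem}). Second, $\bar EM_e\xi=M_e\xi$: take a weak$^*$ cluster point $e_0$ of the BAI $(u_\al)$; the exact norm identity $e_0b_n=b_n$ can safely be pushed through $\imath$, and separate continuity of multiplication in $\tilcM$, applied in $n$ only, gives $\imath(e_0)e=e$, so $e_0$ fixes $\Ran(M_e)$ pointwise; meanwhile $\bar E=e_0$ on $\Ran(M_e)$, by evaluating $\bE_\al(p)\eta=u_\al\eta$ against vectors and passing to the WOT limit along a common subnet. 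Combining these two claims with $\bar EM_e=M_e\bar E$ on $\Dom(M_e)$ (Lemma~\ref{l:commuting}) yields $M_e\xi=\bar EM_e\xi=M_e\bar E\xi=\bar E\xi$, which is the identification you need. If you replace your double-limit step by these two claims, your argument becomes the paper's proof.
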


Proposition~\ref{p:Gifford-plus} is based on an argument of Gifford (\cite[Lemmas 1.5 and~4.4]{Gifford}), and the core idea in our proof is the same one underlying his result. However, new complications arise since we are dealing with potentially unbounded operators, which are only densely defined; we have therefore chosen a slightly different approach to Gifford's.

\begin{proof}[Proof of Proposition~\ref{p:Gifford-plus}]
Since multiplication in $\tilcM$ is separately continuous,
\[ \imath(a)e=\lim_n \imath(ab_n) = \lim_n \imath(b_na) = e\imath(a) \quad\text{ for all $a \in \fA$.} \]
Hence by Lemma~\ref{l:commuting}, both $\Dom(M_e)$ and $\Ran(M_e)$ are $\fA$-invariant linear subspaces of $\cH$.
Moreover, by Lemma~\ref{l:unbdd-idem}, $\Ran(M_e)$ is closed in~$\cH$, and the orthogonal projection $P$ from $\cH$ onto $\Ran(M_e)$ is an element of~$\cM$.

We now use Lemma~\ref{l:bypassing-op-amen}. Let $\bE_\al$ be the elementary operator described there, and define $Q_\al\defeq \bE_\al(P)\in\cM$.
The net $(Q_\al)$ is bounded and hence has a \uweak{\cM}-cluster point $Q\in\cM$, with $\norm{Q}\leq K$. 
Put $u_\al\defeq\sum_i c^\al_id^\al_i = \bE_\al(I)$, and recall (Lemma~\ref{l:bypassing-op-amen}\ref{li:BAI}) that $(u_\al)$ is a BAI for $\fA$ of norm~$\leq K$.

In the case where $I$ is in the WOT-closure of $\fA$, we observe that $(u_\al)$ converges \uweak{\cM} to~$I$. (Let $e_0$ be any \uweak{\cM}-cluster point in $\cM$ of the net $(u_\al)_\al$; then $ae_0=a$ for all $a\in\fA$, and since there is by assumption a net $(a_i)\subset\fA$ WOT-converging to $I$, for each $\xi,\eta\in\cH$ we have
\[ \ip{e_0\xi}{\eta} = \lim_i \ip{a_ie_0\xi}{\eta}
= \lim_i \ip{a_i\xi}{\eta} = \ip{\xi}{\eta} \]
so that $e_0=I$.)
Thus, in this case, $\bE_\al(2P-I)\to 2Q-I$ in the \uweak{\cM} topology, yielding the improved estimate $\norm{2Q-I}\leq K\norm{2P-I}=K$.

Now $Q\in\fA'$, since for each $b\in\fA$ we have $\norm{bQ_\al-Q_\al b} =\norm{b\bE_\al(P)-\bE_\al(P)b}\to 0$ by Lemma~\ref{l:bypassing-op-amen}.
It follows by continuity of multiplication in $\tilcM$ and the assumption on $e$ that  $\imath(Q)e=e\imath(Q)$.
By Lemma~\ref{l:commuting},
\begin{equation}\label{eq:Pietersen}
Q(\Dom(M_e))\subseteq\Dom(M_e) \quad\text{and}\quad
M_e Q(\xi)=Q(M_e\xi) \text{ for all $\xi\in\Dom(M_e)$.}
\end{equation}

We now make the following claims:
\begin{YCnum}
\item\label{li:eQ=Q} $M_e Q\xi = Q\xi$ for all $\xi\in\Dom(M_e)$;
\item\label{li:Qe=e} $Q(M_e\xi) = M_e\xi$ for all $\xi\in\Dom(M_e)$.
\end{YCnum}
Combining \ref{li:eQ=Q} and \ref{li:Qe=e} with \eqref{eq:Pietersen} gives $M_e\xi = QM_e\xi = M_e Q\xi = Q\xi$ for each $\xi\in\Dom(M_e)$, which would imply the desired result (since $\Dom(M_e)$ is dense in $\cH$, by Theorem~\ref{t:densely-defined}, and $\norm{Q\xi}\leq K\norm{\xi}$).
It therefore merely remains to prove \ref{li:eQ=Q} and~\ref{li:Qe=e}.

\noindent\textit{Proof of \ref{li:eQ=Q}}\/.
Since $P(\cH)=\Ran(M_e)$ is $\fA$-invariant, $PaP=aP$ for all $a\in\fA$. Hence
\[ PQ_\al= \sum_i Pc^\al_i P d^\al_i  = \sum_i c^\al_i Pd^\al_i = Q_\al \quad\text{ for all $\al$}, \]
which implies $PQ=Q$ by \uweak{\cM}-continuity.
On the other hand, since $P(\cH)=\Ran(M_e)$, Lemma~\ref{l:unbdd-idem} implies that $M_e P= P$, so that $M_e Q= M_e PQ = PQ =Q$ as required.
% and thus \ref{li:eQ=Q} holds.

\noindent\textit{Proof of \ref{li:Qe=e}}\/.
We have $P M_e(\xi) = M_e(\xi)$ for all $\xi\in \Dom(M_e)$.
If $b\in\fA$ then $b(\Ran(M_e))\subseteq \Ran(M_e)$, and so $Pb$ and $b$ agree on $\Ran(M_e)$.
Therefore, for each $\eta\in\Ran(M_e)$,
\begin{equation}\label{eq:almost}
Q_\al (\eta) = \sum_i c^\al_i P d^\al_i (\eta)
= \sum_i c^\al_id^\al_i (\eta) = u_\al (\eta) \in\cH \quad\text{for all $\al$\/.}
\end{equation}

Let $e_0\in\cM$ be a \uweak{\cM}-cluster point of the bounded net $(u_\al)$; this is also a cluster point in the WOT of~$\Bdd(\cH)$, hence $Q(\eta)=e_0(\eta)$ by \eqref{eq:almost}.
But since $(u_\al)$ is an approximate identity for~$\fA$ we also have $e_0b_n=b_n$ for all $n$, which by (separate) continuity of multiplication in $\tilcM$ implies that $\imath(e_0)e=e$. Therefore $e_0M_e$ agrees with $M_e$ on $\Dom(M_e)$, and so for each $\xi\in\Dom(M_e)$ we have $M_e\xi = e_0 M_e\xi = QM_e\xi$ as required.
\end{proof}

\end{section}

\begin{section}{The main proof}\label{s:mainhack}
Throughout this section $\cM$ denotes a von Neumann algebra, faithfully and non-degenerately represented on a Hilbert space~$\cH$, and equipped with a faithful, finite, normal tracial state~$\tau$. We fix a norm-closed subalgebra $\fA\subseteq \cM$ which is operator amenable with constant $\leq K$, but for the moment is not assumed to be commutative.
Given $x\in\cM$, we write $\rho(x)$ for its spectral radius.
%%  (Recall from basic spectral theory that if $B$ is any closed, unital subalgebra of $\cM$ which contains $x$, then the spectral radius of $x$ relative to $B$ is independent of the choice of $x$.)

Our starting point for the proof of Theorem~\ref{t:mainthm} is the following result, which is the key place where we require $\tau$ to be finite rather than merely semifinite.

\begin{prop}\label{p:startingpoint}
$\onenorm{a} \leq K\spr(a)$ for each $a$ in the centre of~$\fA$.
\end{prop}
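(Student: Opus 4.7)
The plan is to use \eqref{eq:predual-is-L1} to write $\onenorm{a}=\sup\{|\tau(ay)|:y\in\cM,\,\norm{y}\leq 1\}$ and bound $|\tau(ay)|$ uniformly in $y$ by $K\spr(a)$. The key ingredients will be Lemma~\ref{l:bypassing-op-amen}, which provides the approximately central elementary operators $\bE_\al,\bF_\al$ of norm $\leq K$, and the Brown-measure estimate $|\tau(x)|\leq\spr(x)$ valid for every $x\in\cM$: in the finite tracial setting, Brown's measure of $x$ is a probability measure supported on $\sigma(x)$ whose first moment is $\tau(x)$, so the estimate is immediate. It is precisely this Brown-measure step that requires $\tau$ to be finite rather than merely semifinite.

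Since $a$ is central in $\fA$ and each $d^\al_i\in\fA$, we have $\bE_\al(a)=\sum_i c^\al_i d^\al_i\, a = u_\al a$, which converges to $a$ in norm by Lemma~\ref{l:bypassing-op-amen}\ref{li:BAI}. For $y\in\cM$ with $\norm{y}\leq 1$, the trace property of $\tau$ yields $\tau(\bE_\al(a)\,y)=\tau(a\,\bF_\al(y))$, and hence
\[ \tau(ay)\;=\;\lim_\al \tau\bigl(a\,\bF_\al(y)\bigr). \]
Set $c_\al\defeq\bF_\al(y)\in\cM$; then $\norm{c_\al}\leq K$, and the same centrality argument, combined with Lemma~\ref{l:bypassing-op-amen}\ref{li:app-central2}, shows $[a,c_\al]=\bF_\al([a,y])\to 0$ in norm. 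Applying the Brown-measure bound to $ac_\al$ gives $|\tau(ac_\al)|\leq\spr(ac_\al)$, so the result will follow once we establish $\limsup_\al\spr(ac_\al)\leq K\spr(a)$.

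This spectral-radius estimate is the main obstacle, as it requires a quantitative version of the observation that $\spr(ac)\leq\spr(a)\norm{c}$ for commuting elements. I would expand $(ac_\al)^n$ as a sum of words in $a$, $c_\al$, and $[a,c_\al]$, isolating the term $c_\al^n a^n$; for each fixed $n$, the remaining words all carry a factor of $[a,c_\al]$ and hence have norm tending to zero with $\al$, while $\norm{c_\al^n a^n}\leq K^n\norm{a^n}$. Combining this with $\spr(ac_\al)\leq\norm{(ac_\al)^n}^{1/n}$ gives $\limsup_\al\spr(ac_\al)\leq K\norm{a^n}^{1/n}$ for every $n$; passing to the infimum over $n$ yields $\limsup_\al\spr(ac_\al)\leq K\spr(a)$, which together with the Brown-measure bound and the supremum over $y$ delivers $\onenorm{a}\leq K\spr(a)$.
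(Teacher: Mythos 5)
Your proof is correct, and its skeleton --- computing $\onenorm{a}$ via \eqref{eq:predual-is-L1}, trading $y$ for $\bF_\al(y)$ by centrality of $a$ and the trace property, and then applying an inequality of the form $\abs{\tau(x)}\leq\spr(x)$ --- is the same as the paper's. You diverge in the two technical sub-steps. First, where you keep the net $c_\al=\bF_\al(y)$ and prove the asymptotic estimate $\limsup_\al\spr(ac_\al)\leq K\spr(a)$ by expanding $(ac_\al)^n$ and absorbing the commutator terms, the paper instead passes to a \uweak{\cM}-cluster point $z$ of the bounded net $(\bF_\al(y))$: since $\norm{a\bF_\al(y)-\bF_\al(y)a}\to 0$, the limit $z$ commutes with $a$ exactly, and $\spr(az)\leq\spr(a)\spr(z)\leq K\spr(a)\norm{y}$ is then immediate from submultiplicativity of the spectral radius on commuting elements. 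Your power-expansion argument is sound --- for fixed $n$ the error terms each carry a factor $\norm{[a,c_\al]}\to 0$ while the norms of $a$ and $c_\al$ stay bounded, and taking $\limsup_\al$ before $\inf_n$ is the correct order --- but the cluster-point device makes this step a one-liner. Second, you derive $\abs{\tau(x)}\leq\spr(x)$ from the Brown measure of $x$; this is valid in the present setting (the Brown measure is a probability measure supported in the spectrum with first moment $\tau(x)$, and its construction indeed needs the trace to be finite), but it is a much heavier tool than the one the paper uses, namely Lemma~\ref{l:sprad-bound}, which rests only on the elementary identity $\spr(b)=\inf\{\norm{t^{-1}bt}\st t\in B,\ t>0\}$ in a unital $\Cst$-algebra. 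The elementary route applies to any finite trace on a unital $\Cst$-algebra (normality is not needed) and keeps the argument self-contained; your Brown-measure step is in effect a noncommutative analogue of the appeal to Lidskii's trace theorem in \cite{Wil_amenop} that the paper was deliberately replacing.
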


The proof of Proposition~\ref{p:startingpoint} is inspired by arguments from \cite[p.~242]{Wil_amenop}, which in effect prove the following result: {\it if a nonzero compact operator on Hilbert space generates an amenable operator algebra, it must have a non-zero eigenvalue.}\/ As in Willis' proof, we exploit the presence of a trace and the submultiplicativity of spectral radius on commutative algebras, but replace his use of Lidskii's trace theorem by the bound given in the following lemma.

\begin{lem}\label{l:sprad-bound}
Let $B$ be a unital $\Cst$-algebra and let $\tau:B\to \Cplx$ be a finite trace on $B$. Then $\abs{\tau(b)} \leq \norm{\tau} \spr(b)$ for each $b\in B$.
\end{lem}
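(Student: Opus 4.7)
The plan is to reduce to the case of a tracial state via Jordan-type decompositions and then apply the classical bound $|\phi(b)|\leq\spr(b)$ that holds for every tracial state $\phi$ on a $\Cst$-algebra. For the reduction, I would write $\tau=\tau_1+i\tau_2$ with each $\tau_j$ a self-adjoint bounded tracial functional, and apply the Jordan decomposition $\tau_j=\tau_j^+-\tau_j^-$ to obtain positive tracial pieces. Traciality is preserved under Jordan because the central projection in $B^{**}$ implementing the decomposition commutes with everything in $B$, so $\tau_j^\pm$ still vanish on commutators. Getting the sharp constant $1$, rather than a larger factor that a careless application would produce, is cleanest to handle by extending $\tau$ normally to $B^{**}$ and invoking the noncommutative Radon--Nikodym theorem on its support ideal: this writes $\tau(x)=\tau_0(hx)$ against a reference faithful normal trace $\tau_0$, with $\|\tau\|=\tau_0(|h|)$.

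For the classical tracial-state bound: given a tracial state $\phi$ on $B$, the GNS representation $\pi_\phi: B \to \Bdd(\cH_\phi)$ has a cyclic trace vector $\Omega$, and the bicommutant $\mathcal{N}:=\pi_\phi(B)''$ is a \emph{finite} von Neumann algebra carrying a faithful normal tracial state $\tilphi(y):=\ip{y\Omega}{\Omega}$ extending $\phi$. Since $\pi_\phi$ is a $*$-homomorphism, spectra can only shrink: $\spr_{\mathcal{N}}(\pi_\phi(b))\leq\spr_B(b)$. It therefore suffices to establish the inner inequality $|\tilphi(y)|\leq\spr_{\mathcal{N}}(y)$ for every $y\in\mathcal{N}$.

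The inner inequality is the main obstacle, and is where genuine finite-von-Neumann-algebra structure is needed. For normal $y$ it is immediate from commutative spectral theory: the $\Cst$-algebra generated by $y$ is isomorphic to $C(\sigma(y))$ and $\tilphi$ restricts to a probability measure on $\sigma_\mathcal{N}(y)$. For arbitrary $y$, my preferred tool is the Brown measure $\mu_y$: a probability measure on $\sigma_{\mathcal{N}}(y)$ whose $n$-th moments satisfy $\tilphi(y^n)=\int\lambda^n\,d\mu_y(\lambda)$, so that the case $n=1$ yields $|\tilphi(y)|\leq\int|\lambda|\,d\mu_y\leq\spr_{\mathcal{N}}(y)$ directly. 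Equivalently, one could combine Dixmier's centre-valued trace with the central decomposition of $\mathcal{N}$ into a direct integral of finite factors, which reduces the question to the factorial case where Brown measure is again the natural tool. Any such route completes the argument, but some nontrivial finite-von-Neumann-algebra machinery is genuinely required to go beyond the normal case.
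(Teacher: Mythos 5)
Your argument is essentially correct in outline, but it takes a far heavier route than the paper, which disposes of the lemma in one line: by \cite[Exercise~2.6]{Murphy_book}, in a unital $\Cst$-algebra one has $\spr(b)=\inf\{\norm{t^{-1}bt} \st t\in B,\ t>0\}$, and since $\tau$ is tracial, $\tau(b)=\tau(t^{-1}bt)$ for every such $t$, whence $\abs{\tau(b)}\leq\norm{\tau}\,\inf_t\norm{t^{-1}bt}=\norm{\tau}\spr(b)$. This works verbatim for any bounded tracial functional, positive or not, with the sharp constant, so your closing claim that ``some nontrivial finite-von-Neumann-algebra machinery is genuinely required to go beyond the normal case'' is refuted: no GNS construction, Brown measure, or direct-integral decomposition is needed, only traciality plus an elementary similarity characterization of the spectral radius. (This matters for the paper's broader aims, which explicitly try to avoid Haagerup--Schultz-type technology.) Your Brown-measure argument for the inner inequality is correct as far as it goes, and under the standard reading of ``finite trace'' as a \emph{positive} bounded tracial functional the reduction to a tracial state is just normalization, so the Jordan/Radon--Nikodym branch is not needed. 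If you do insist on handling non-positive tracial functionals your way, the Radon--Nikodym step is under-justified as written: to get $\abs{\tau_0(hb)}\leq\tau_0(\abs{h})\spr(b)$ you need to observe that traciality of $\tau$ forces the density $h$ to be central, take a polar decomposition $h=u\abs{h}$ with $u$ a central unitary on the support, and then use submultiplicativity of the spectral radius for \emph{commuting} elements to bound $\spr(ub)\leq\spr(b)$; none of this is in your sketch, and without centrality the state $x\mapsto\tau_0(\abs{h}x)/\tau_0(\abs{h})$ need not even be tracial. In short: the approach can be made to work, but every difficulty you flag dissolves under the paper's elementary similarity trick.
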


Lemma~\ref{l:sprad-bound} follows immediately from the fact (see~\cite[Exercise~2.6]{Murphy_book}) that in any unital $\Cst$-algebra $B$ we have $\spr(b) = \inf \{ \norm{t^{-1}bt} \st t\in B, t>0 \}$.
%\footnotetext{The earliest reference I have found is as an observation in a paper of Hildebrandt}.}

\begin{proof}[Proof of Proposition~\ref{p:startingpoint}]
Let $a$ be in the centre of $\fA$ and let $y\in\cM$. With the definitions of Lemma~\ref{l:bypassing-op-amen}, put $z_\al\defeq\bF_\al(y)=\sum_i d^\al_i y c^\al_i$. The net $(z_\al)$ is bounded in norm by~$K\norm{y}$,
and so has a \uweak{\cM}-cluster point, $z$ say, which satisfies $\norm{z}\leq K\norm{y}$.
Since $a$ is central in $\fA$,
\begin{equation}\label{eq:SAW}
az_\al  =  \sum_i d^\al_ia yc^\al_i = \bF_\al(ay) \quad\text{and}\quad
   z_\al a =  \sum_i d^\al_iy ac^\al_i = \bF_\al(ya),
\end{equation}
and so $\norm{az_\al-z_\al a}\to 0$ by Lemma~\ref{l:bypassing-op-amen}.
By continuity $az=za$, and therefore
\begin{equation}\label{eq:enroute}
\spr(az) \leq \spr(a) \spr(z) \leq K \spr(a) \norm{y}.
\end{equation}
On the other hand, since $\tau$ is a trace, using \eqref{eq:SAW} gives
\[ \tau(az_\al) = \sum_i \tau(d^\al_i ayc^\al_i) = \sum_i \tau(c^\al_id^\al_iay) = \tau(u_\al ay) \]
and it follows (since $\tau\cdot a\in\cM_*$ and $(u_\al)$ is a BAI for $\fA$) that
\[ \tau(az) = \lim_\al \tau(az_\al) = \lim_\al \tau(au_\al y)=\tau(ay)\,.\]
Combining this with \eqref{eq:enroute} and Lemma~\ref{l:sprad-bound}, we obtain
\[ \tau(ay) =\tau(az) \leq \spr(az)\leq K\spr(a)\norm{y}. \]
In particular, $\tau(\abs{a})\leq K\spr(a)$ as required.
\end{proof}

Since $\tau$ is faithful,
Proposition~\ref{p:startingpoint} has the following corollary, which we would like to highlight.

\begin{cor}\label{c:semisimple}
If $\fB$ is an (operator) amenable closed subalgebra of a finite von Neumann algebra, then the centre of $\fB$ is semisimple.
\end{cor}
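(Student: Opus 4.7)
The plan is to identify the Jacobson radical of $Z(\fB)$ with the set of its quasi-nilpotent elements (standard for any commutative Banach algebra) and then apply Proposition~\ref{p:startingpoint} together with faithfulness of a trace to force every such element to vanish.

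The first step is a reduction to the case where $\cM$ carries a faithful, normal, finite tracial state~$\tau$. Using the $\ell^\infty$-direct sum decomposition $\cM = \bigoplus_i \cM_i$ into $\sigma$-finite finite von Neumann algebras from the paper's remark following Theorem~\ref{t:mainthm}, with associated central projections~$p_i$, I would replace $\fB$ by $\fB_i \defeq \overline{p_i \fB} \subseteq \cM_i$. The map $b\mapsto p_i b$ is a completely bounded algebra homomorphism $\fB \to \cM_i$, so $\fB_i$ inherits operator amenability with constant $\leq K$ (quotients of operator amenable algebras being operator amenable, as noted in the paper); and $p_i a \in Z(\fB_i)$ whenever $a \in Z(\fB)$, with $\spr(p_i a) \leq \spr(a)$. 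Since $\sum_i p_i = \id$ strongly, it suffices to show $p_i a = 0$ for each~$i$.

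The core argument is then a single line. Take $a \in Z(\fB)$ with $\spr(a)=0$. Proposition~\ref{p:startingpoint} gives $\onenorm{a} \leq K\spr(a) = 0$, and faithfulness of $\tau$ forces $a=0$; hence $\Rad(Z(\fB))=0$, which is exactly the semisimplicity claim. There is no serious obstacle: all of the substantive work was carried out in Proposition~\ref{p:startingpoint}, and the only care needed is to verify that the $\sigma$-finite reduction is compatible with both operator amenability and the centrality of~$a$, both of which are routine.
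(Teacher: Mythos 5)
Your proof is correct and takes essentially the same route as the paper: Corollary~\ref{c:semisimple} is obtained there directly by combining Proposition~\ref{p:startingpoint} with the faithfulness of~$\tau$, applied to quasi-nilpotent elements of the centre. The only difference is that you spell out the routine reduction to the case of a faithful normal finite trace via the $\ell^\infty$-direct sum decomposition, which the paper leaves implicit after noting it in the introduction.
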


\emph{For the remainder of this section}, we shall make the extra assumption that $\fA$ is commutative (as in the statement of Theorem~\ref{t:mainthm}) and non-zero.
Recall that our goal is to prove the following

\begin{thmstar}[{Theorem~\ref{t:mainthm}, reprise.}]
Under the assumptions made on $\cM$ and $\fA$, there exists a positive invertible operator $R\in\cM$ such that $R\fA R^{-1}$ is a self-adjoint subalgebra of $\cM$.
\end{thmstar}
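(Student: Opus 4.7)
The plan is to identify $\fA$ with a dense subalgebra of some $C_0(X)$, construct a large family of commuting idempotents in $\cM$ corresponding to characteristic functions of Borel subsets of $X$, and then invoke the Unitarization Lemma~\ref{l:unitarize-idem} to simultaneously unitarize them. Since $\fA$ is commutative, Corollary~\ref{c:semisimple} gives that $\fA$ is semisimple while Corollary~\ref{c:dense-range} gives that the Gelfand transform $\Gelf:\fA\to C_0(X)$ is injective with dense range. Commutativity also ensures that the spectral radius agrees with the sup-norm of the Gelfand transform, so Proposition~\ref{p:startingpoint} reads $\onenorm{a}\leq K\supnorm{\Gelf(a)}$ for all $a\in\fA$. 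Combining this with the continuous embedding $L^1(\tau)\hookrightarrow\tilcM$, the assignment $\Gelf(a)\mapsto\imath(a)$ extends by density to a continuous linear map $\theta:C_0(X)\to\tilcM$, and separate continuity of multiplication in $\tilcM$ (Theorem~\ref{t:Nelson}) makes $\theta$ an algebra homomorphism.

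Next, I would invoke Grothendieck's theorem: any bounded linear map from $C_0(X)$ into an $L^1$-space is $2$-summing, and so $\theta$ factors as $C_0(X)\hookrightarrow L^2(\mu)\stackrel{\tilde\theta}{\longrightarrow} L^1(\tau)$ for some finite Radon measure $\mu$ on $X$. Using uniformly bounded $L^2(\mu)$-approximation, one verifies that $\tilde\theta$ is multiplicative on $L^\infty(\mu)\subseteq L^2(\mu)$, so for every Borel set $E\subseteq X$ the element $e_E\defeq\tilde\theta(\chi_E)\in L^1(\tau)\subseteq\tilcM$ is an idempotent. A Urysohn-type approximation of $\chi_E$ by uniformly bounded continuous functions on $X$, together with sup-norm density of $\Gelf(\fA)$ in $C_0(X)$, produces a sequence $(a_n)\subseteq\fA$ with $\imath(a_n)\to e_E$ in $\tilcM$. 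Proposition~\ref{p:Gifford-plus} then forces $e_E$ to lie in $\cM$ with $\norm{e_E}\leq K$ and $\norm{2e_E-I}\leq 1+2K$ in general (or $\leq K$ when $I$ lies in the \uweak{\cM}-closure of $\fA$).

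The family $\cF\defeq\{e_E:E\subseteq X\text{ Borel}\}$ consists of mutually commuting idempotents (by multiplicativity of $\tilde\theta$) and is closed under symmetric differences, so Lemma~\ref{l:unitarize-idem} produces a positive invertible $R\in\cM$, with $\norm{R}\norm{R^{-1}}\leq(1+2K)^2$ (or $\leq K^2$ in the unital case), such that each $Re_ER^{-1}$ is a projection in $\cM$. Define $\Psi(f)\defeq R\theta(f)R^{-1}\in\tilcM$ for $f\in C_0(X)$. For a Borel-simple function $f=\sum_i c_i\chi_{E_i}$ with disjoint $E_i$, $\Psi(f)$ is a linear combination of mutually orthogonal projections in $\cM$, hence has $\cM$-norm $\max_i|c_i|=\supnorm{f}$. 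Uniform approximation of arbitrary $f\in C_0(X)$ by such simple functions, together with continuity of conjugation by $R$ and of the involution in $\tilcM$, then promotes $\Psi$ to an isometric $*$-homomorphism $C_0(X)\to\cM$. Since $\Psi(\Gelf(\fA))=R\fA R^{-1}$ is norm-dense in the norm-closed image $\Psi(C_0(X))$, we conclude $R\fA R^{-1}=\Psi(C_0(X))$ is a $\Cst$-subalgebra of $\cM$, as required.

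The main obstacle is the delicate interplay of three topologies -- the sup-norm on $C_0(X)$, $\onenorm{\cdot}$ on $L^1(\tau)$, and the measure topology on $\tilcM$ -- especially in the Grothendieck factorization step: one must carefully arrange that characteristic functions are approximable by \emph{uniformly bounded} elements of $\Gelf(\fA)$ in $L^2(\mu)$, in order that $\tilde\theta(\chi_E)$ is genuinely an idempotent in $\tilcM$ rather than merely an element to which a sequence of squares converges separately. A secondary subtle point is the isometry of $\Psi$, which depends crucially on the passage from merely commuting idempotents of bounded norm to mutually \emph{orthogonal} projections -- an orthogonality only available after unitarization -- so that the $\Cst$-identity $\norm{\sum c_iP_i}=\max_i|c_i|$ for orthogonal projections can deliver the required sup-norm estimate.
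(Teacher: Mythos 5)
Your proposal is correct and follows essentially the same route as the paper: semisimplicity and dense Gelfand range, the trace estimate $\onenorm{a}\leq K\spr(a)$, a $2$-summing factorization of $\theta$ through $L^2(\mu)$, idempotents $\imath_0\tilthet(\chi_E)$ pulled back into $\cM$ via Proposition~\ref{p:Gifford-plus}, the unitarization lemma, and a final isometry/surjectivity argument identifying $R\fA R^{-1}$ with a commutative $\Cst$-algebra. The one citation to tighten is the factorization step: $L^1(\tau)$ is a \emph{noncommutative} $L^1$-space, so the $2$-summing property of $\theta$ does not come from the classical Grothendieck theorem for maps into commutative $L^1$ but from Tomczak-Jaegermann's result that von Neumann algebra preduals have cotype~$2$ (combined with Pietsch domination), which is exactly what the paper invokes in Theorem~\ref{t:tomczak}.
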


\begin{proof}[Proof of the theorem]
Let $X$ be the character space of~$\fA$, equipped with the Gelfand topology, and let $\Gelf:\fA\to C_0(X)$ denote the Gelfand transform $a\mapsto \what{a}$. 
(See \cite[\S17]{BonsDunc} for a basic account of Gelfand theory for non-unital, commutative Banach algebras.)
By Corollary~\ref{c:semisimple}, $\fA$ is semisimple, so $X$ is non-empty and $\Gelf$ is a norm-decreasing, injective algebra homomorphism. 

By Corollary~\ref{c:dense-range}, $\Gelf(\fA)$ is dense in $C_0(X)$. Hence, by Proposition~\ref{p:startingpoint} and the fact that $\rho(a)=\supnorm{\what{a}}$ for all $a\in\fA$, there is a unique continuous linear map $\theta: C_0(X)\to L^1(\tau)$, of norm $\leq K$, which makes the diagram in Figure~\ref{fig:extend1} commute.

\begin{figure}
\[ \begin{diagram}[tight,height=2.5em]
 \fA & \rSub & \cM \\
 \dTo^{\Gelf} & & \dTo_{\imath_1} \\
 C_0(X) & \rTo_{\theta} & L^1(\tau)
\end{diagram} \]
\caption{Extending to $C_0(X)$}
\label{fig:extend1}
\end{figure}

Let $\cS$ be the ring of subsets of $X$ generated by the open sets
(so we allow complements, finite unions, and finite intersections). We write $\chi_U$ for the indicator function of $U\in\cS$ and define $\sB$ to be the finite linear span in $\Cplx^X$ of $\{\chi_U\st U\in\cS\}$; then take $\tilsB$ to be the closure of $\sB$ in the \emph{uniform norm} on~$X$.
It is straightforward to check that $C_0(X)\subseteq \tilsB$.
%% Keep in source but rem out
% (For, given $f\in C_0(X)$ and $\veps>0$ there is an obvious way to discretize the real and imaginary parts of $f$, so that we obtain a step function $h\in\sB$ with $\sup_{x\in X} \abs{f(x)-h(x)}<\veps$.)
%%
We shall now extend $\theta$ to a continuous linear map $\tilthet:\tilsB\to L^1(\tau)$; the images of characteristic functions under $\imath_0\tilthet$ will be idempotents in~$\tilcM$, and with a little book-keeping we can then apply our version of Gifford's argument (Proposition~\ref{p:Gifford-plus}).

We obtain this extension of $\theta$ using a result of Tomczak-Jaegermann, which can be thought of as an intermediate stage between (one version of) the classical Grothendieck inequality and the later, noncommutative versions of Pisier and Haagerup.

\begin{thm}[Tomczak-Jaegermann]\label{t:tomczak}
Let $\cM$ be a von Neumann algebra, let $\Omega$ be a locally compact Hausdorff space, and let $T:C_0(\Omega)\to\cM_*$ be a bounded linear map. Then
there exists a \emph{finite}, positive, regular Borel measure $\mu$ on~$\Omega$, such that
\begin{equation}
\norm{T(f)} \leq \left( \int_\Omega \abs{f(x)}^2\,d\mu(x) \right)^{1/2} \quad\text{for all $f\in C_0(\Omega)$.}
\end{equation}
\XTRACORR\label{x:tomczak-lc}
\end{thm}

\begin{proof}
First suppose that $\Omega$ is compact. Then the result is given by \cite[Theorem 4.2]{TomJae_predual-cotype}. In slightly more detail: it is shown in \cite{TomJae_predual-cotype} that the predual of any von Neumann algebra is a Banach space of cotype~2; hence by a variant of Grothendieck's theorem, the map $T$ is $2$-summing, and the existence of a measure $\mu$ with the stated properties now follows from the Pietsch domination theorem. For further details see \cite[Corollary~2.15 and Theorem~11.14]{DieJarTon}.

In the general case, where $\Omega$ need not be compact, the simplest approach is to observe that $T:C_0(\Omega)\to \cM_*$ can always be extended continuously to $T^\sharp: C(\Omega^\sharp) \to \cM_*$, where $\Omega^\sharp$ is the one-point compactification of $\Omega$\/; the result for the compact case yields a (finite, positive, regular) measure on $\Omega^\sharp$, whose restriction to $\Omega$ has the required properties. (Alternatively, one could adapt the proofs of the results used in the compact case to the locally compact setting, but this is rather time-consuming.)
\end{proof}

%Theorem~\ref{t:tomczak} can be sharpened, using Haagerup's noncommutative version of Groth\-en\-dieck's inequality. However, it is not clear that this stronger version would allow one to shorten the arguments which follow, and it seems instructive to show that we can make do with older and easier results.

By Theorem~\ref{t:tomczak}
%% and Remark~\ref{r:predual-is-L1},
there exists a positive finite regular Borel measure $\mu$ on~$X$, such that
\begin{equation}\label{eq:two-beats-one}
 \onenorm{\theta(f)} \leq \twonorm{f} \quad\text{for all $f\in C_0(X)$.}
\end{equation}
(Note that, {\it a priori}\/, $\twonorm{\cdot}$ might only be a seminorm on $\tilsB$, rather than a norm.)
Our next lemma is a slightly more precise version of the following assertion: there is a continuous linear map $\tilthet: \tilsB \to L^1(\tau)$
which makes the diagram in Figure~\ref{fig:extend2} commute.

\begin{figure}[hpt]
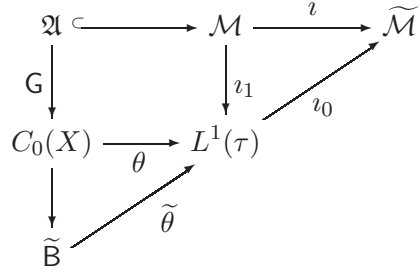

%% {Perhaps translate this into XY-PIC?}
\[ \begin{diagram}[tight,height=2em]
 \fA & \rSub & \cM & \rTo^{\imath} & \tilcM \\
 \dTo^{\Gelf} & & \dTo_{\imath_1} & \ruTo_{\imath_0} & \\
 C_0(X) & \rTo_{\theta} & L^1(\tau) & & \\
 \dTo & \ruTo_{\quad\tilthet} & & & \\
 \tilsB &  & & &
\end{diagram} \]
\caption{A further extension}
\label{fig:extend2}
\end{figure}

\begin{lem}\label{l:extension}
There exists a unique linear map $\tilthet:\tilsB\to L^1(\tau)$ with the following properties:
\begin{YCnum}
\item $\tilthet(f)=\theta(f)$ for all $f\in C_0(X)$;
\item\label{li:two-to-one} $\tilthet$ takes $\twonorm{\cdot}$-Cauchy sequences in $\tilsB$ to Cauchy sequences in $L^1(\tau)$.
\end{YCnum}
Moreover, $\imath_0\tilthet:\tilsB\to\tilcM$ is a homomorphism.
\end{lem}

\begin{proof}
Clearly the restriction of $\mu$ to $X$ is finite, Borel and regular, so by \dt{Lusin's theorem} (see e.g.~\cite[Theorem~2.4]{Rudin_RCA3}) the canonical map $C_0(X)\to L^2(X,\mu)$ has dense range.
In particular,
\begin{equation}\label{eq:ATRAXI}
\text{for each $h\in\tilsB$ there exists a sequence $(f_n)\subset C_0(X)$ such that $\twonorm{h-f_n}\to 0$.}
\tag{$\dagger$}
\end{equation}
(We can bypass the explicit use of Lusin's theorem to prove~\eqref{eq:ATRAXI}, at the expense of a slightly longer argument; see Remark~\ref{r:bypassing-Lusin}.)
Now put
$\tilthet(h)=\lim_n\theta(f_n)$, the limit existing by \eqref{eq:two-beats-one}.
It is routine to check that $\tilthet$ is well-defined, linear, and satisfies (i) and~(ii); uniqueness can also be easily verified.

%\smallskip

It remains to show that $\imath_0\tilthet:\tilsB\to\tilcM$ is a homomorphism. We shall proceed in some detail.
Let $g,h\in \tilsB$ and let $(a_n)$, $(b_m)$ be sequences in $\fA$ such that
$\lim_n \twonorm{\what{a_n}-g} = 0$ and $\lim_m \twonorm{\what{b_m}-h} = 0$.
Then from~\eqref{eq:two-beats-one} we have
\begin{equation}\label{eq:bits1}
\lim_n \onenorm{\imath_1(a_n)-\tilthet(g)} = 0
\quad\text{and}\quad
\lim_m \onenorm{\imath_1(b_m)-\tilthet(h)} = 0,
\end{equation}
\begin{equation}\label{eq:bits2}
\lim_n\onenorm{\tilthet(\what{a_n}h-gh)}= 0
\quad\text{and}\quad
\lim_m\onenorm{\tilthet(\what{a_n}\what{b_m}-\what{a_n}h)}=0 \text{ for each $n$.}
\end{equation}
Therefore, inside $L^1(\tau)$ we have
\[ \begin{aligned}
\tilthet(gh)
  = \lim_n \tilthet(\what{a_n}h)
 & = \lim_n\lim_m\tilthet(\what{a_n}\what{b_m})
	 &	 & \qquad \qquad\text{(by \eqref{eq:bits2})} \\
 & = \lim_n\lim_m\tilthet(\what{a_nb_m})
	 & = \lim_n\lim_m \imath_1(a_nb_m)
	 & \qquad\qquad\text{(since $\tilthet\Gelf=\imath$)} \\
\end{aligned} \]
so that, by continuity of $\imath_0$, we have $\imath_0\tilthet(gh)=\lim_n\lim_m \imath(a_nb_m)$.
On the other hand
\[ \begin{aligned}
\imath_0\tilthet(g)\imath_0\tilthet(h)
 & = \lim_n [\imath(a_n)\imath_0\tilthet(h)]
 & = \lim_n\lim_m \imath(a_n)\imath(b_m)
\end{aligned} \]
where in each equality, we used \eqref{eq:bits1} and separate continuity of multiplication in $\tilcM$.
Since $\imath:\fA\to\tilcM$ is a homomorphism we are done.
\end{proof}

\begin{rem}\label{r:bypassing-Lusin}
The claim in~\eqref{eq:ATRAXI} can be proved directly by using some of the ideas that go into proving Lusin's theorem. For the reader's convenience we sketch an argument for doing this.
Since convergence in the uniform norm implies $\twonorm{\cdot}$-convergence, it suffices to prove that \eqref{eq:ATRAXI} holds for each $h\in \sB$; then, by linearity, we can reduce further to the case where $h=\chi_U$ for some open $U\subseteq X$.
Fixing an open subset $U\subseteq X$, regularity of $\mu$ (as a measure on $X$) implies we can find an increasing sequence $K_1\subseteq K_2 \subseteq \dots \subseteq U$ of compact subsets with $\mu(K_n)\nearrow\mu(U)$.
For each $n$, Urysohn's lemma ensures there exists $h_n\in C_0(X)$ with $0\leq h_n\leq 1$, $h_n(x)=0$ for all $x\in X\setminus U$, and $h_n(x)=1$ for all $x\in K_n$; it follows that $\twonorm{h_n-\chi_U} \leq \mu(U\setminus K_n)^{1/2} \to 0$, and so \eqref{eq:ATRAXI} is proved.
\end{rem}

\begin{prop}\label{p:bounded}
There exists a commuting family $(e_U)_{U\in\cS}$ of idempotents in $\cM$, such that $\tilthet(\chi_U)=\imath_1(e_U)$ for all $U\in\cS$. We have $\sup_U \norm{e_U}\leq K$, and if $I$ is in the WOT-closure of $\fA$ we have $\sup_U \norm{2e_U-I}\leq K$. Moreover, $e_Ue_V=e_{U\cap V}$ for all $U,V\in\cS$.

Consequently, there exists $R\in\cM$, positive and invertible with $\norm{R}\norm{R^{-1}}\leq (1+2K)^2$, such that $Re_UR^{-1}$ is self-adjoint for each $U\in\cS$. If $I$ is in the WOT-closure of $\fA$ we have $\norm{R}\norm{R^{-1}}\leq K^2$.
\end{prop}

\begin{proof}
Let $U\in\cS$. By Lemma~\ref{l:extension}, $\imath_0\tilthet$ is a homomorphism,
and so $\imath_0\tilthet(\chi_U)$ is an idempotent in $\tilcM$. 
By using the claim \eqref{eq:ATRAXI} from the proof of Lemma~\ref{l:extension}, and the density of $\Gelf(\fA)$ in $C_0(X)$, we can extract a sequence $(b_n)\subset\fA$ such that
\[ \imath_0\tilthet(\chi_U) = \imath_0(\lim_n \tilthet(\what{b_n})) = \lim_n \imath_0\tilthet(\what{b_n}) = \lim_n \imath(b_n).\]
Hence by Proposition~\ref{p:Gifford-plus} there is a unique idempotent $e_U\in\cM$ with $\imath(e_U)=\imath_0\tilthet(\chi_U)$; moreover, $\norm{e_U}\leq K$, and if $I$ is in the WOT-closure of $\fA$ we have $\norm{2e_U-I}\leq K$. Finally, if $U,V\in\cS$ then $\imath(e_Ue_V) =\imath_0\tilthet(\chi_U)\imath_0\tilthet(\chi_V) =\imath_0\tilthet(\chi_{U\cap V})$, and thus $e_Ue_V=e_{U\cap V}=e_Ve_U$.

This last identity implies that the family $(e_U)_{U\in\cS}$ is closed under symmetric differences, and therefore by applying our unitarization lemma (Lemma~\ref{l:unitarize-idem}), we see that there exists $R\in\cM$, positive and invertible, such that $Re_UR^{-1}$ is self-adjoint for each $U\in\cS$ and $\norm{R}\norm{R^{-1}}\leq \sup_U \norm{2e_U-I}^2\leq (1+2K)^2$. As observed earlier in the proof, if $I$ lies in the WOT-closure of $\fA$ then this can be improved to~$K^2$.
\end{proof}

\REFCORR
%% Confused and confusing remark removed

\begin{prop}\label{p:almost-there}
There exists a (norm-)continuous algebra homomorphism $\phi:\tilsB\to\cM$
such that $\imath\phi=\imath_0\tilthet$. 
For each $h\in\tilsB$, we have $\norm{R\phi(h)R^{-1}} \leq \supnorm{h}$ and
\begin{equation}\label{eq:Bresnan}
(R\phi(h)R^{-1})^* = R\phi(\overline{h})R^{-1} \,.
\end{equation}
\end{prop}

\begin{proof}[Proof of Proposition~\ref{p:almost-there}]
We shall initially define $\phi$ on the dense subalgebra $\sB$ and then show it can be extended by continuity.

Thus, given $f\in\sB=\lin\{\chi_U \st U\in\cS\}$, we know by Proposition~\ref{p:bounded} that $\imath_0\tilthet(f)\in \imath(\cM)$. Let $\phi(f)$ be the unique element of $\cM$ satisfying $\imath(\phi(f))=\imath_0\tilthet(f)$.
It is easily checked that $\phi:\sB\to \cM$ is an algebra homomorphism, since $\imath_0\tilthet$ is an algebra homomorphism and $\imath$ is injective.

Given $c_1,\dots,c_n\in\Cplx$ and pairwise disjoint $U_1,\dots, U_n\in\cS$, we have
\begin{equation}\label{eq:key}
\tag{$*$}
R\phi\left( \sum_{i=1}^n c_i \chi_{U_i} \right)R^{-1} = \sum_{i=1}^n c_i Re_{U_i}R^{-1} ,
\end{equation} 
and since $Re_UR^{-1}$ is self-adjoint for each $U\in\cS$, it follows from Equation~\eqref{eq:key} that
\begin{equation}\label{eq:now-sa}
\tag{$**$}
(R\phi(f)R^{-1})^*= R\phi(\overline{f})R^{-1}\qquad\text{for all $f\in\sB$.}
\end{equation}
Moreover, if $f\in\sB$ we may write it as $f= \sum_{i=1}^m c_i \chi_{U_i}$, as in Equation~\eqref{eq:key}, so that 
\[
\norm{R\phi(f)R^{-1}}_{\cM}
= \norm{\sum_{i=1}^m c_i Re_{U_i} R^{-1}} = \max_{1\leq i \leq m} \abs{c_i} = \supnorm{f} \,,
\]
using the fact that the $Re_{U_i}R^{-1}$ form a family of \emph{pairwise orthogonal projections}. Thus $\phi$ is continuous as a linear map from the normed space $(\sB,\supnorm{\cdot})$ to the Banach space $(\cM,\norm{\cdot})$, and hence has a unique (norm-)continuous extension $\tilsB\to\cM$, which we also denote by $\phi$.
Since $\phi\vert_{\sB}$ is an algebra homomorphism, so is $\phi$. Equation~\eqref{eq:Bresnan} now follows by continuity from the special case~\eqref{eq:now-sa}. It remains only to note that if $h\in\tilsB$, and $(f_n)$ is any sequence in $\sB$ with $\supnorm{f_n-h}\to 0$, then
\[ \imath\phi(h)=\imath\left(\lim_n\phi(f_n)\right) = \lim_n \imath\left(\phi(f_n)\right) = \lim_n \imath_0\tilthet(f_n) = \imath_0\tilthet(h); \]
thus $\imath\phi=\imath_0\tilthet$, and the proof of the proposition is complete.
\end{proof}

We can now complete the proof of Theorem~\ref{t:mainthm}. For each $a\in\fA$, we know that $\imath(a) = \imath_0\tilthet(\what{a})$; but by Proposition~\ref{p:almost-there} we also have $\imath_0\tilthet(\what{a}) = \imath\phi(\what{a})$.
Since $\imath$ is injective this implies that the inclusion map $\fA \to \cM$ factors as $\phi\Gelf$, and so $\Gelf$ is bounded below as a linear map; since $\Gelf$ has dense range by Corollary~\ref{c:dense-range}, this forces it to be surjective.
Hence, given $a\in\fA$, there exists $b\in\fA$ such that $\what{b}=\overline{\what{a}}$, and applying \eqref{eq:Bresnan}  gives
\[ (RaR^{-1})^* = (R\phi(\what{a})R^{-1})^* =R\phi(\what{b})R^{-1} = RbR^{-1}\in R\fA R^{-1}\,.\]
Thus $R\fA R^{-1}$ is a self-adjoint subalgebra of $\tilcM$, as required.
\end{proof}

\begin{rem}
Once we know $\fA$ is similar to a self-adjoint subalgebra of $\cM$, it is immediate that the Gelfand transform is bijective. However, at present
I don't see how
to show that $\fA$ is similar to a self-adjoint subalgebra without first proving that the Gelfand transform is surjective.
\end{rem} 

\end{section}

\begin{section}{An example for contrast}\label{s:examples}

It follows from Theorem~\ref{t:mainthm} that
$\clos[\WOT]{\fA}$ has a large supply of idempotents, as it is similar to an abelian von Neumann algebra.  
If we had been able to find such idempotents in advance, then the proof of Theorem~\ref{t:mainthm} would have been easier and shorter.
However, there are examples of semisimple, regular, commutative subalgebras of 2-homogeneous von Neumann algebras, whose WOT-closures contains no idempotents other than $0$ or~$I$. This suggests that, given a commutative subalgebra $\fA$ of a finite von Neumann algebra, any attempt to produce non-trivial idempotents in $\clos[\WOT]{\fA}$ has to use some reasonably strong hypotheses on~$\fA$.

Such an example may be found within the class of (isomorphic images of) little Lipschitz algebras on compact metric spaces, whose definition we briefly recall.
Let $(X,d)$ be a compact, infinite metric space; let $Y=\{ (x,y)\in X\times X \st x\neq y\}$; and let $0<\al<1$. The \dt{little Lipschitz algebra} $\lip_\al(X,d)$ is the space of all functions $f:X\to \Cplx$ for which the associated function
\[ \Delta_\al f : Y \to \Cplx; \quad (x,y) \mapsto
\frac{f(x)-f(y)}{d(x,y)^\al} \]
belongs to $C_0(Y)$. Equipped with the norm $\norm{f}_\al\defeq \supnorm{f}+\supnorm{\Delta_\al f}$, $\lip_\al(X,d)$ is a regular, semisimple, commutative Banach algebra~\cite[\S2]{Sherbert_TAMS}; it is shown in \cite{BCD_Lip} that $\lip_\al(X,d)$ is weakly amenable (see that paper for the definition) when $\al <1/2$, yet is never amenable.
%% {[See also BAAC Thm 5.6.14]}

We write $\Mat_2$ for the algebra of $2\times 2$ complex matrices, and define $\cM$ to be the (2-homogeneous) von Neumann algebra
\[ \ell^\infty(Y, \Mat_2) \equiv  \ell^\infty(Y)\mathop{\overline{\otimes}} \Mat_2 \equiv \prod_{(x,y)\in Y} \Mat_2 \;. \]
If $T\in\cM$ and $(x,y)\in Y$, we write $T(x,y)$ for the $2\times 2$ matrix that occurs as the ``$(x,y)$th coordinate'' of~$T$.
The predual of $\cM$ is $\ell^1(Y, \Mat_2)\equiv \ell^1(Y)\ptp \Mat_2$, with duality implemented by the pairing
\[ (S,T) \mapsto \sum_{(x,y) \in Y} \tr (S(x,y)T(x,y)) \]

The following observation appears to be folklore (the author learned of it from M.~C. White).
\begin{lem}
Let $0<\al<1$. There is an injective, continuous algebra homomorphism $\theta: \lip_\al(X,d)\to \cM$ with norm-closed range, which satisfies
\begin{equation}\label{eq:lip-embedding}
 \theta(f)(x,y) = \left[\begin{matrix} f(x) & \Delta_\al f(x,y) \\ 0 & f(y) \end{matrix}\right] \quad\text{for all $(x,y)\in Y$.}
\end{equation}
\end{lem}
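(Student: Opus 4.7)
The plan is to verify, in order, that $\theta$ is well-defined and norm-continuous, that it is multiplicative, that it is injective, and that it has norm-closed range; each step is essentially a direct calculation from the explicit formula \eqref{eq:lip-embedding}, so I do not anticipate a serious obstacle.

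For continuity: if $f\in\lip_\al(X,d)$ then the diagonal entries of $\theta(f)(x,y)$ are bounded by $\supnorm{f}$ and the upper-right entry by $\supnorm{\Delta_\al f}$, so $\theta(f)\in\cM$, and the elementary bound on the operator norm of an upper-triangular $2\times 2$ matrix by the sum of its entries gives $\Norm{\theta(f)}_\cM\leq 2\norm{f}_\al$. The substantive point is multiplicativity: expanding the matrix product $\theta(f)(x,y)\theta(g)(x,y)$, the only non-obvious entry is the upper-right, which coincides with $\Delta_\al(fg)(x,y)$ via the standard decomposition
\[ \frac{f(x)g(x)-f(y)g(y)}{d(x,y)^\al}
   = f(x)\,\Delta_\al g(x,y) + \Delta_\al f(x,y)\,g(y). \]
This identity also confirms that $\lip_\al(X,d)$ is closed under pointwise multiplication (the right-hand side is in $C_0(Y)$ as a sum of products of bounded continuous functions with elements of $C_0(Y)$), so $fg$ does lie in the domain of $\theta$ as required.

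Injectivity is immediate: if $\theta(f)=0$, then reading off the $(1,1)$-entry at any $(x,y)\in Y$ yields $f(x)=0$, and since $X$ is infinite such a $y$ exists for every $x\in X$.

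For norm-closed range I would show $\theta$ is bounded below. For an upper-triangular matrix $M=\left(\begin{smallmatrix} a & b \\ 0 & c\end{smallmatrix}\right)$, computing its action on the standard basis gives $\Norm{M}\geq\max(|a|,|b|,|c|)$. Taking suprema over $(x,y)\in Y$ therefore yields
\[ \Norm{\theta(f)}_\cM \;\geq\; \max\!\bigl(\supnorm{f},\;\supnorm{\Delta_\al f}\bigr) \;\geq\; \tfrac12\norm{f}_\al, \]
so $\theta$ is a topological embedding of a Banach space into $\cM$, and hence its range is norm-closed.
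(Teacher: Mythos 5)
Your proposal is correct and follows essentially the same route as the paper's (outline of) proof: define $\theta$ pointwise via $\theta_{x,y}$, check multiplicativity by the Leibniz-type identity for $\Delta_\al(fg)$, bound $\|\theta(f)\|$ above by $2\|f\|_\al$, and bound it below by $\tfrac12\|f\|_\al$ using the entrywise lower bound on the norm of a $2\times2$ matrix, which gives injectivity and closed range at once. The only difference is that you spell out the computations the paper calls ``straightforward.''
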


\begin{proof}[Outline of proof]
Given $(x,y)\in Y$, define $\theta_{x,y}: \lip_\al(X) \to \Mat_2$ by setting $\theta_{x,y}(f)$ to be the right-hand side of \eqref{eq:lip-embedding}. Straightforward calculations show that $\theta_{x,y}$ is a homomorphism of norm $\leq 2$, and that $2\sup_{(x,y)\in Y}\norm{\theta_{x,y}(f)} \geq \supnorm{f}+\supnorm{\Delta_\al f}_\al$.
\end{proof}

We let $\fA$ denote $\theta(\lip_\al(X,d))$. This is a commutative operator algebra contained in $\cM$.
and we define $\fB$ to be its $\uweak{\cM}$-closure.

\begin{prop}
Let $T\in \fB$. Then there exists a unique function $h:X\to\Cplx$ such that $T(x,y)=\theta_{x,y}(h)$ for all $(x,y)\in Y$. Moreover, $h$ is continuous.
\end{prop}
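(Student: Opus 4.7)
The plan is to exploit that the predual $\cM_* = \ell^1(Y, \Mat_2)$ described in the excerpt contains, for each $(x_0,y_0) \in Y$ and each pair of matrix indices $(i,j)$, the rank-one evaluation functional $S_{(x_0,y_0),ij}: T \mapsto T(x_0,y_0)_{ij}$. Hence ultraweak convergence in $\cM$ automatically forces pointwise-entrywise convergence of every approximating net, which is an extremely rigid constraint and will make the recovery of $h$ essentially formulaic.

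Fix $T \in \fB$ and choose a net $(f_\al) \subseteq \lip_\al(X,d)$ with $\theta(f_\al) \to T$ ultraweakly. For each $x \in X$, pick any $y \neq x$ (possible since $X$ is infinite) and observe that $\theta(f_\al)(x,y)_{11} = f_\al(x)$ converges to $T(x,y)_{11}$; since the left-hand side is independent of the choice of $y$, so is the limit, and one may unambiguously define $h(x) \defeq \lim_\al f_\al(x)$. Reading off the four matrix entries in the same way then yields $T(x,y)_{11} = h(x)$, $T(x,y)_{22} = h(y)$, $T(x,y)_{21} = 0$, and
\[ T(x,y)_{12} \;=\; \lim_\al \frac{f_\al(x) - f_\al(y)}{d(x,y)^\al} \;=\; \frac{h(x) - h(y)}{d(x,y)^\al} \;=\; \Delta_\al h(x,y), \]
so $T(x,y) = \theta_{x,y}(h)$ on all of $Y$. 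Uniqueness of $h$ is immediate, since the very formula $T(x,y)_{11} = h(x)$ pins down the value of $h$ at every point.

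For continuity of $h$ I would use that $T \in \cM$ has finite norm, so $|T(x,y)_{12}| \leq \norm{T}$; rearranging the displayed identity then gives $|h(x) - h(y)| \leq \norm{T}\,d(x,y)^\al$ for every $(x,y) \in Y$, showing that $h$ is in fact H\"older of order $\al$, and so certainly continuous. There is no serious obstacle here; the only subtlety worth highlighting is that the approximating net $(f_\al)$ need not be norm-bounded in $\lip_\al(X,d)$, so one cannot use a Banach--Alaoglu compactness argument to extract pointwise limits. Instead, pointwise convergence of $(f_\al(x))$ is read off directly from the definition of the ultraweak topology, by testing $\theta(f_\al) \to T$ against the single rank-one functional $S_{(x,y),11}$ for any fixed $y \neq x$.
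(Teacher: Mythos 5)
Your proposal is correct and is essentially the paper's argument: both rest on the observation that the coordinate functionals $T\mapsto T(x,y)_{ij}$ lie in the predual $\ell^1(Y,\Mat_2)$, so that the linear relations defining the range of $\theta$ pass to the ultraweak closure (you phrase this via a convergent net, the paper by saying the identities are $\uweak{\cM}$-closed), after which $h(x)\defeq T(x,y)_{11}$ and the H\"older estimate $\abs{h(x)-h(y)}\leq\norm{T}\,d(x,y)^\al$ are read off identically. Your closing caveat about the net not being norm-bounded is a sensible extra precision, but it does not change the substance.
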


\begin{proof}
If $a\in \fA=\theta(\lip_\al([0,1]))$, then
\begin{equation}\label{eq:lip-identities}
\begin{aligned}
 a(x,y)_{11} = a(x,z)_{11} = a(y,x)_{22} = a(z,x)_{22} 
   & \quad\text{whenever $x\neq y$ and $x\neq z$} \\
 a(x,y)_{21} = 0 
   & \quad\text{whenever $x\neq y$} \\
 a(x,y)_{11} - a(x,y)_{22} = d(x,y)^\al a(x,y)_{21}  
   & \quad\text{whenever $x\neq y$} 
\end{aligned}
\end{equation}

Hence, by $\uweak{\cM}$-continuity, these identities also hold for $T\in\fB$. We may therefore define $h:X\to\Cplx$ by setting $h(x) = T(x,y)_{11}$ (this does not depend on the choice of $y$). Since $T$ satisfies~\eqref{eq:lip-identities}, it can be checked that $\theta_{x,y}(h)=T(x,y)$ whenever $x\neq y$; and clearly $h$ is the unique function with this property. Finally, for each $x\neq y$ we have
\[ \abs{ h(x)-h(y)} = \abs{ d(x,y)^\al T(x,y)_{12} } \leq \norm{T} d(x,y)^\al \]
and thus $h$ is continuous.
\end{proof}

It follows that if $X$ is connected, then $\fB$ contains no non-trivial idempotents.
\end{section}

\begin{section}{Some closing remarks}

\paragraph{Attempting to use the total reduction property} 
The articles \cite{Gifford} and \cite{Marcoux_JLMS08} work with a property that is weaker than operator amenability, namely the \dt{total reduction property} (see the first of these articles for the definition and further discussion).
The second half of the argument in Section~\ref{s:mainhack} (once we have set up the map $\imath_0\tilthet:C_0(X)\to \tilcM$) might remain valid if we only assume the total reduction property, especially since Proposition~\ref{p:Gifford-plus} is modelled on arguments from \cite{Gifford}. However, the first half of our argument seems to crucially use some version of amenability rather than merely the total reduction property, because we are looking at a certain $\fA$-bimodule action on $\cM$, and not just at actions of $\fA$ on Hilbert space.

\paragraph{Operator-Connes-amenable cases}
For certain completely contractive Banach algebras which have natural preduals, one can consider the notion of \dt{operator-Connes-amenability}. (This notion seems to have been first formally introduced in the article~\cite{RunSp_OABG}, to which we refer the reader for the context, definition, and further references to the literature). 
I suspect that one could extend the method of proof in this article,
to show that if a \wstar-closed, commutative subalgebra of a finite von Neumann algebra is operator-Connes-amenable, then it is similar inside $\cM$ to a self-adjoint subalgebra. On the other hand, such a proof would require extra technical baggage, and does not seem to lead to greater generality in practice:
for in all examples I know of,
an (operator-)Connes-amenable dual Banach algebra has a norm-closed and \wstar-dense subalgebra which is (operator) amenable, and Theorem~\ref{t:mainthm} may be applied directly to that subalgebra to obtain the desired similarity.

\paragraph{A possible alternative proof}
Inspection of our proof of Theorem~\ref{t:mainthm} shows that a key step involved, in passing, the construction of
a large supply of closed $\fA$-invariant subspaces in $\cM$ -- namely, the ranges of the idempotents $e_U$ that were discussed in Proposition~\ref{p:bounded} -- which give a kind of spectral decomposition for elements of $\fA$.
With this in mind, it seems plausible that the spectral subspaces and unbounded-idempotent-valued-measure discussed in \cite{Schultz_JFA06} might, in combination with some variant of Proposition~\ref{p:Gifford-plus}, provide another way to obtain our result. 
However, those constructions rest on deep and hard results of Haagerup and Schultz on (hyper)invariant subspaces for arbitrary operators in ${\rm II}_1$ factors, whose proofs appear to require a formidable amount of work. 
The approach taken in Section~\ref{s:mainhack} seems to use less machinery and is relatively self-contained, save for Theorem~\ref{t:tomczak}. (It~is worth remarking that the algebra $\tilcM$ is also needed in the aforementioned work of Haagerup and Schultz, for similar reasons to the present article; one is trying to construct idempotents using some kind of functional calculus, but without {\it a priori} resolvent estimates one has to look in a larger space than $\cM$ for the limits of approximating sequences.)
\end{section}

\subsection*{Acknowledgements}
The work here was partly supported by an internal grant for new faculty from the University of Sask\-atch\-ewan, and partly by an NSERC Discovery Grant.

I would like to thank Martin Argerami for kindly supplying part of the argument in the proof of Lemma~\ref{l:unbdd-idem}, and Serban Belinschi, Ebrahim Samei and Stuart White for their constructive feedback on earlier versions of this article.
Matthew Daws was a careful reader and made several useful suggestions and corrections; I would also like to thank him for valuable discussions of~\cite{Wil_amenop}, back in 2006, and for later bringing~\cite{Nelson_NCI} to my attention.

The diagrams in this article were prepared using Paul Taylor's {\tt diagrams.sty} macros.

% tweaked from what amsplain provides
\providecommand{\bysame}{\leavevmode\hbox to3em{\hrulefill}\thinspace}
\newcommand{\MR}[2]{{\hfill[\href{http://www.ams.org/mathscinet-getitem?mr=#1}{MR~#2}]}}

%\bibliography{YCmain,BAgenref,BAcoho,amenop,mathover,miscOS}
%\bibliographystyle{amsplain}

\vfill\noindent\contact

\end{document}